\def\R{\mathbb{R}}
\def\Q{\mathbb{Q}}
\def\Z{\mathbb{Z}}
\def\Ker{\mathrm{K}}
\def\K{\mathbb{K}}
\def\C{\mathcal{C}}
\def\N{\mathbb{N}}
\def\Mo{\mathrm{M}}
\def\w{\mathrm{w}}
\def\a{\alpha}
\newcommand{\dsum}{\displaystyle\sum}
\newcommand{\dmax}{\displaystyle\max}
\newcommand{\dlim}{\displaystyle\lim}
\newcommand{\bfrac}[2]{\genfrac{(}{)}{0pt}{}{#1}{#2}}
\newtheorem{thm}{Theorem}[section]
\newtheorem{defn}{Definition}[section]
\newtheorem{lem}{Lemma}[section]
\newtheorem{prop}{Proposition}[section]
\newtheorem{cor}{Corollary}[section]
\newtheorem{ex}{Example}[section]
\newtheorem{rmk}{Remark}[section]
\pgfplotsset{compat=newest}
\newcommand{\sign}{\mathrm{sgn}}
\authors \else \@setauthors \fi
\authors \else \centering\@setauthors \fi
\title{On $\ell_{\lowercase{p}}$-Support Vector Machines and Multidimensional Kernels}
\author{V\'ictor Blanco\textsuperscript{$\dagger$}}
\address{\textsuperscript{$\dagger$}Dpt. Quantitative Methods for Economics \& Business, Universidad de Granada}
\email{vblanco@ugr.es}
\author{Justo Puerto\textsuperscript{$\ddagger$}}
\address{\textsuperscript{$\ddagger$}Dpt. Statistics \& OR, Universidad de Sevilla}
\email{puerto@us.es}
\author{Antonio M. Rodr\'iguez-Ch\'ia\textsuperscript{$\star$}}
\address{\textsuperscript{$\star$}Dpt. Statistics \& OR, Universidad de C\'adiz}
\email{antonio.rodriguezchia@uca.es}
\begin{document}

\begin{abstract}
In this paper, we extend the  methodology developed for Support Vector Machines (SVM) using $\ell_2$-norm  ($\ell_2$-SVM)  to the more general case of  $\ell_p$-norms with $p\ge 1$
($\ell_p$-SVM).  The resulting primal and dual problems are formulated as mathematical
programming problems;  namely, in the primal case, as a  second order cone optimization
problem and in the dual case, as a polynomial optimization problem involving homogeneous
polynomials.  Scalability of the primal problem is obtained via general transformations based
on the expansion of functionals in Schauder spaces. The concept of Kernel function, widely
applied in $\ell_2$-SVM,  is extended to the more general case by defining a new operator
called multidimensional Kernel. This object gives rise to reformulations of dual problems,
in a transformed space of the original data, which are solved by a moment-sdp based approach. The results of some computational experiments on real-world
datasets are  presented showing rather good behavior in terms of standard
indicators such a \textit{accuracy index} and its ability to classify new data.
\end{abstract}

\keywords{Support Vector Machines,  Kernel functions, $\ell_p$-norms, Mathematical Programming.}

\subjclass[2010]{62H30, 90C26, 53A45, 15A60.}

\maketitle

\section{Introduction}

In  supervised classification, given a  finite set of objects partitioned into classes, the goal is to build a mechanism, based on current available information, for classifying new objects  into  these classes.  Due to their successful applications in the last decades, as for instance in writing  recognition~\cite{writing}, insurance companies (to determine whether an
applicant is a high insurance risk or not)~\cite{insurance}, banks (to decide whether an applicant is a good credit risk or not)~\cite{credit}, medicine (to determine whether a tumor is benigne or maligne)~\cite{cleveland,cancer}, etc; support vector machines (SVMs) have become a popular methodology for
supervised classification~\cite{Burges98}.

Support vector machine (SVM) is a mathematical programming tool, originally developed by  Vapnik \cite{Vapnik95,Vapnik98} and Cortes and Vapnik \cite{CortesVapnik95}, which consists in finding  a hyperplane  to separate a set of data into two classes, so that the distance from the hyperplane to the nearest point of each  class is maximized. In order to do that, the standard SVM solves an optimization problem that accounts for both the training error and the model complexity. Thus, if the separating hyperplane is  given as $ \mathcal{H}=\{z \in \R^d: \omega^t z+b=0\}$, the function to be  minimized  is of the form $\frac{1}{2}\|\omega\|^2+C \cdot R_{emp}(\mathcal{H})$, where $\|\cdot\|$ is a norm and $R_{emp}$ is an empirical measure of the risk incurred using the hyperplane $\mathcal{H}$ to classify  the training data. The most popular  version of  SVM is the one using the Euclidean norm to measure the distance. This approach allows for the use of a kernel function as a way of embedding the original data in a higher dimension space where the separation may be easier without increasing the difficulty of solving the problem (the so-called \textit{kernel trick}).

After a fruitful development of the above approach, some years later, it was observed by several authors that the model complexity could be controlled by other norms different from the Euclidean one (see \cite{BennettBredensteiner00,GonzalezAbril11,IkedaMurata05a,IkedaMurata05b,PedrosoMurata01}). Among many other facts, it is  well-known that using $\ell_1$ or $\ell_{\infty}$ norms (as well as any other polyhedral norms) gives rise to SVM  whose induced optimization problems are linear rather than quadratic, making, in principle, possible solving larger size instances. Moreover, it is also agreed that $\ell_1$-SVM tends to generate sparse classifiers that can be more easily interpreted and reduce the risk of overfitting. The use of more general norms, as the family of $\ell_p$, $1<p<+\infty$, has been also  partially investigated  \cite{cr13, GonzalezAbril11,Liu07}. For this later case, some geometrical intuition on the underlying problems has been given but very few is known about the optimization problems (primal and dual approaches), transformation of data, extensions of the kernel tools (that have been extremely useful in the Euclidean case) and about actual applications to classify databases.

The goal of this paper is to develop a common framework for the analysis of $\ell_p$-norm Support Vector Machine ($\ell_{p}$-SVM) with general   $p \in \Q$ and $p> 1$. We shall develop the theory to understand primal and dual versions of this problem using  these norms. In addition, we also extend the concept of kernel as a way of considering data embedded in a  higher dimension space without increasing the difficulty of  tackling the problem,  that in the general case always appears via homogeneous  polynomials and linear functions.
In our approach, we reduce all the primal problems to efficiently solvable Second Order Cone  Programming  (SOCP) problems. The respective dual problems are reduced to solving polynomial optimization problems. A thorough geometrical analysis of those problems allows  for an extension of the kernel trick, applicable to the Euclidean case, to the more general $\ell_p$-SVM. For that extension, we introduce the concept of multidimensional kernel, a mathematical object that makes the above mentioned job.
In addition, we  derive the relationship between multidimensional kernel  functions and real tensors. In particular, we provide sufficient conditions to test whether a symmetric real tensor, of adequate dimension and order, induces one of the above mentioned multidimensional kernel functions. Also, we develop two different approaches to find the separating hyperplanes. The first one is based on the Theory of Moments and it constructs a sequence of SemiDefinite Programs (SDP) that converges to the optimal solution of the problem. The second one uses limited expansions of functionals representable in Schauder spaces~\cite{LiTza77}, and it allows us to approximate any transformation (whose functional belongs to  a Schauder space) in the original space without mapping the data.  Both approaches permit to reproduce the \textit{kernel trick} even without specifying any a priori transformation.  We report the results of an extensive battery of  computational experiments, on  some common real-world instances on the SVM field, which are comparable or superior to those previously known in the literature.

The rest of the paper is organized as follows. In Section \ref{sec:2}, we introduce  $\ell_p$-support vector machines. We derive primal and dual formulations for the problem, as a second order cone programming problem and as polynomial optimization problem involving homogeneous polynomials, respectively. In addition,  using the dual formulation, we give explicit expressions of the separating hyperplanes expressed as homogeneous polynomials  on the original data. In Section \ref{sec:3}, the concept of multidimensional Kernel is defined to extend the Kernel theory for $\ell_2$-SVM to a more general case of $\ell_p$-SVM with $p>1$.
A hierarchy of Semidefinite Programs (SDP) that converges to the actual solution of $\ell_p$-SVM is developed in Section \ref{sec:4}. Finally, in Section \ref{sec:5}, the results of some computational experiments on real-world datasets are reported.

\section{$\ell_p$-norm Support Vector Machines\label{sec:2}}

For a given $p \in \Q$ with $p>1$, the goal of this section is to provide a general framework to deal with  $\ell_p$-SVM.  In $\ell_p$-SVM, the problem will be formulated as a mathematical programming problem whose objective function depends  on the $\ell_p$-norm of some  of the decision variables.
The input data for this problem is a set of $d$ quantitative measures about $n$ individuals. The $d$ measures about each individual $i \in \{1, \ldots, n\}$ are identified with the vector $\mathrm{x}_{i \cdot} \in \R^d$, while for $j\in \{1, \ldots, d\}$, the $n$ observations about the $j$-th measure are denoted by $\mathrm{x}_{\cdot j} \in \R^n$. The $i$th individual is also classified into a class  $y_i$,  with $y_i \in \{-1, 1\}$, for $i=1,\ldots, n$. The  classification pattern is defined  by $\mathbf{y}=(y_1, \ldots, y_n) \in \{-1,1\}^n$.\\
The goal of SVM is to find a hyperplane $\mathcal{H} = \{z \in \R^d: \omega^t z + b =0\}$ in $\mathbb{R}^d$ that minimizes the misclassification of data to their own class in the sense that is  explained  below.
 SVM tries to find a band defined by two parallel hyperplanes,
 $\mathcal{H}_+ = \{z \in \R^d: \omega^t  z  + b = 1\}$ and $\mathcal{H}_{-} = \{z \in \R^d: \omega^t  z  + b =-1\}$ of maximal width without misclassified observations. The ultimate aim is that each class belongs to one of the halfspaces determined by the strip. Note that if the data are linearly separable, this constraint can be written as follows:
$$y_i (\omega^t \mathrm{x}_{i\cdot}  + b) \geq 1, \quad i=1, \ldots, n.$$
Since in many cases a linear separator is not possible, misclassification is allowed by adding a variable $\xi_i$ for each individual which will take value $0$ if the observation is adequately classified with  respect to this strip, i.e., the above constraints are fulfilled  for that individual;  and it will take a positive value proportional on how far is the observation  from being well-classified. (This misclassifying error is usually called the \textit{hinge--loss} of the  $i$th individual and represents the amount $\max\{0, 1-y_i(\omega^t \mathbf{x}_{i\cdot}+b)\}$,  for all $i=1, \ldots, n$.) Then, the constraints to be satisfied are:
$$
y_i (\omega^t  \mathrm{x}_{i\cdot}  + b) \geq 1 - \xi_i , \quad \forall i =1, \ldots, n.
$$
Therefore, the goal will be  simultaneously to maximize the margin between the two  hyperplanes, $\mathcal{H}_+$  and $\mathcal{H}_{-}$
and   to minimize  the deviation of  misclassified observations.
To measure the norm-based margin between the hyperplanes $\mathcal{H}_+$ and $\mathcal{H}_{-}$, one can use the results by Mangasarian \cite{mangasarian}, to obtain that whenever the distance measure is the $\ell_q$-norm (with $\frac{1}{p}+\frac{1}{q}=1$), the margin for $\ell_p$-SVM is exactly $\frac{2}{\|\omega\|_p}$ (Recall that $\|\omega\|_q$ is the dual norm of $\|\omega\|_p$.). Henceforth, we assume without loss of generality that $q=\frac{r}{s}>1$, with $r, s \in \mathbb{Z}_+$ and $\gcd(r,s)=1$.

Next, for the deviation of misclassified observations, one can take the summation of the slack variables $\xi_i$ as a measure for   that term  in the objective function.  Thus, the problem of finding the best hyperplane based on the above two criteria can be equivalently modeled with the aggregated objective function $\|\omega\|_p^p +  C\sum_{i=1}^n \xi_i$, where $C$ is a  parameter of the model representing the tradeoff between the margin and the deviation of misclassified points (weighting the importance given to the correct classification of the observations in the training dataset or to the ability of the model to classify out-sample data).

Hence, the  $\ell_p$-SVM problem can be formulated as:
\begin{align}
\rho^* = \min &\: \: \: \|\omega\|_p^p + C \dsum_{i=1}^n \xi_i&\label{svm0}\tag{$\ell_p{\rm  \mbox{-}SVM}$}\\
\mbox{ s.t. }  &\: \: \:  y_i (\omega^t  \mathrm{x}_{i\cdot}  + b) \geq 1 - \xi_i, &\forall i =1, \ldots, n,\nonumber\\
                     &\: \:\: \xi_i \geq 0, &\forall i =1, \ldots, n,\nonumber\\
                     &\: \:\:   \omega \in \R^d, b\in\R.&\nonumber
\end{align}

Observe that the above problem is a convex nonlinear optimization problem which can be efficiently solved using global optimization tools. Actually, it can be formulated as the following  convex optimization problem with a linear objective function, a set of  linear constraints and a single nonlinear inequality constraint:
\begin{align}
\min &\;\;t + C \dsum_{i=1}^n \xi_i&\label{cc1}\\
\mbox{ s.t. }& y_i (\omega^t  \mathrm{x}_{i\cdot}  + b) \geq 1 - \xi_i,& \forall i =1, \ldots, n,\\
& t \geq \|\omega\|_p^p, &\label{c:q}\\
& \xi_i \geq 0, &  \forall i =1, \ldots, n,\\
&  \omega \in \R^d, b,  t \in \R,&\label{cc5}
\end{align}
where constraint $t \geq \|\omega\|_p^p$ can be conveniently reformulated by introducing new variables $v_j$ and  $u_j$  to account for $|\omega_j|$ and $|\omega_j|^p$, respectively (note that  $p=\frac{r}{r-s}$), for $j=1, \ldots, d$:

 \begin{align}
& v_j \geq \omega_j \,  &\forall j=1, \ldots, d,\nonumber\\
& v_j \geq -\omega_j \, &\forall j=1, \ldots, d,\nonumber\\
&t \geq \sum_{j=1}^d u_j,&\nonumber\\
& u_j^{r-s} \geq v_j^r, &\forall j=1, \ldots, d. \label{socprs}
\end{align}

Although the above formulation is still nonlinear, constraints in \eqref{socprs} can be efficiently rewritten as a set of second order cone constraints and then solved via interior point algorithms (see \cite{BEP14}).

 At this point, we would like to remark that the cases $p=1, +\infty$ also fit (with slight simplifications) within the above framework and obviously they both give rise to linear programs that can be solved via standard linear programming tools. For that reason, we do not follow up with  their analysis in this paper that focus on more general problems that fall in the class of conic linear programming,
  i.e., we assume without loss generality that $1<p < +\infty$.

A second reformulation is also possible using its Lagrangian dual formulation.
\begin{prop}
 The Lagrangian dual problem of \eqref{svm0} can be formulated as a polynomial optimization problem.
\end{prop}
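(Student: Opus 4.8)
The plan is to build the Lagrangian dual of \eqref{svm0} explicitly and then recast the resulting objective and constraints as polynomials. First I would attach multipliers $\a_i \ge 0$ to the margin constraints $y_i(\omega^t \mathrm{x}_{i\cdot}+b)\ge 1-\xi_i$ and $\beta_i\ge 0$ to the sign constraints $\xi_i\ge 0$, forming $L(\omega,b,\xi;\a,\beta)=\|\omega\|_p^p+C\sum_i\xi_i-\sum_i\a_i\big(y_i(\omega^t\mathrm{x}_{i\cdot}+b)-1+\xi_i\big)-\sum_i\beta_i\xi_i$. The dual function is the infimum of $L$ over $(\omega,b,\xi)$, which I would evaluate one variable block at a time.

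The blocks in $b$ and $\xi$ are linear and handled immediately: the coefficient of $b$ is $-\sum_i\a_iy_i$, so a finite infimum forces the linear equality $\sum_i\a_iy_i=0$; the coefficient of each $\xi_i$ is $C-\a_i-\beta_i$, so finiteness forces $\a_i+\beta_i=C$, and eliminating $\beta_i\ge 0$ gives the box constraints $0\le\a_i\le C$. Both are linear, hence polynomial, in $\a$.

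The crucial block is the minimization in $\omega$. Collecting terms I must evaluate $\min_\omega\{\|\omega\|_p^p-\sum_j c_j\omega_j\}$ with $c_j:=\sum_i\a_iy_i\mathrm{x}_{ij}$ linear in $\a$. Since $\|\omega\|_p^p=\sum_j|\omega_j|^p$ is separable, this splits into $d$ one-dimensional convex problems $\min_{\omega_j}\{|\omega_j|^p-c_j\omega_j\}$; the stationarity condition $p|\omega_j|^{p-1}\sign(\omega_j)=c_j$ gives $|\omega_j|=(|c_j|/p)^{q-1}$ with $\sign(\omega_j)=\sign(c_j)$, and substituting back (using $(q-1)p=q$) shows each optimal value equals $-\frac{p-1}{p^q}|c_j|^q$. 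Thus the dual function is $g(\a)=\sum_i\a_i-\frac{p-1}{p^q}\sum_j|c_j|^q$, and the dual problem is the maximization of $g$ subject to the two linear conditions above. Recognizing that the convex conjugate of $|\cdot|^p$ reproduces precisely the dual exponent $q=\frac{p}{p-1}$ is the heart of the computation and the step I expect to require the most care.

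The last step removes the only non-polynomial piece, $\sum_j|c_j|^q$ with $q=\frac{r}{s}$ and $\gcd(r,s)=1$. I would introduce epigraph variables $w_j$ and rewrite the objective as $\sum_i\a_i-\frac{p-1}{p^q}\sum_j w_j$ under the constraints $w_j\ge|c_j|^{r/s}$ and $w_j\ge 0$. The remaining obstacle is that $w_j\ge|c_j|^{r/s}$ carries both a fractional exponent and an absolute value; I would clear both at once by raising to the $2s$-th power, valid because both sides are nonnegative and $t\mapsto t^{2s}$ is increasing on $[0,\infty)$, obtaining the genuine polynomial inequality $w_j^{2s}\ge c_j^{2r}$, where the even exponent $2r$ absorbs the absolute value. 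Since each $c_j$ is linear in $\a$, every $c_j^{2r}$ is a homogeneous polynomial of degree $2r$ in $\a$, so the objective and all constraints are polynomials in $(\a,w)$; this exhibits the Lagrangian dual as a polynomial optimization problem and completes the argument.
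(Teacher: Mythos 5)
Your proof is correct, and it follows essentially the same route as the paper: form the Lagrangian, eliminate the $b$ and $\xi$ blocks to get $\sum_i\alpha_iy_i=0$ and $0\le\alpha_i\le C$, compute the separable infimum in $\omega$ via the conjugate of $|\cdot|^p$ to obtain the dual function $\sum_i\alpha_i-\frac{p-1}{p^q}\sum_j\bigl|\sum_i\alpha_iy_ix_{ij}\bigr|^q$ (identical to the paper's, since $\frac{1}{p^q}-\frac{1}{p^{q-1}}=-\frac{p-1}{p^q}$), and then remove the non-polynomial term by an epigraph substitution. The only divergence is the final encoding: the paper introduces two auxiliary variables per coordinate, $\delta_j\ge\pm\sum_i\alpha_iy_ix_{ij}$ (linear) and $u_j^s\ge\delta_j^r$, keeping the polynomial degree at $\max(r,s)$, whereas you absorb the absolute value by squaring, using a single variable $w_j$ with $w_j^{2s}\ge c_j^{2r}$, which doubles the degree to $2r$ in $\alpha$. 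Both encodings are exact because the coefficient of the epigraph variable in the maximization is negative, so the optimum drives it to its lower bound; the paper states this sign observation explicitly, and you should too, since the equivalence of the constraint sets alone does not give equivalence of the optimization problems. The trade-off is that your encoding is leaner in variables but the paper's lower-degree form is what feeds the rest of the paper: the degree-$r$ homogeneous structure underlies Theorem \ref{t:12} and the multidimensional kernel construction, and the relaxation order (hence the size) of the moment-SDP hierarchy in Section \ref{sec:4} grows with the degree, so degree $2r$ would be computationally more costly there.
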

\begin{proof}
Observe first that \eqref{svm0} is convex and satisfies Slater's qualification constraint therefore it has zero duality gap with respect to the Lagrangian dual problem. Its Lagrangian function is:
\begin{equation}
\label{lag}\tag{${\rm LD}$}
L(\omega, b; \alpha, \beta) =  \|\omega\|_p^p + C \dsum_{i=1}^n \xi_i - \dsum_{i=1}^n \alpha_i (y_i (\omega^t  \mathrm{x}_{i\cdot}  + b) + \xi_i -1) - \dsum_{i=1}^n \beta_i \xi_i ,
\end{equation}
where $\alpha_i$ is the dual variable associated to constraints $y_i (\omega^t  \mathrm{x}_{i\cdot}  + b) \geq 1 - \xi_i$ and $\beta_i$ the one for constraints $\xi_i\geq 0$, for $i=1, \ldots, n$.
The KKT optimality conditions for the problem read as:
\begin{eqnarray}
\label{c1}
 & & \dfrac{\partial L}{\partial \omega_j} =  p |\omega_j|^{p-1}  \sign(w_j)- \dsum_{i=1}^n \alpha_i y_i x_{ij} =0, \quad \forall j=1,\ldots,d, \\
 \label{c2}
& & \dfrac{\partial L}{\partial b} = \dsum_{i=1}^n \alpha_i y_i =0,\\
\label{c3}
& &\dfrac{\partial L}{\partial 	\xi_i} = C - \alpha_i - \beta_i =0, \quad \forall i=1,\ldots,n,\\
\label{c4}
& &\alpha_i, \beta_i \geq 0, \quad \forall  i=1,\ldots,n.\nonumber
\end{eqnarray}
where $\sign(\cdot)$ stands for the sign function.

Hence, applying conditions \eqref{c2} and \eqref{c3}, we obtain the following alternative expression  of \eqref{lag}:
\begin{equation*}
\label{lag2}
L(\omega, b; \alpha) = \|\omega\|_p^p - \dsum_{i=1}^n \alpha_i y_i \omega^t  \mathrm{x}_{i\cdot}  + \dsum_{i=1}^n \alpha_i.
\end{equation*}
In addition, from \eqref{c1} and taking into account that  $\frac{1}{p-1}=q-1$, we can reconstruct the optimal value of $\omega_j$ for any  $j=1,\ldots, d$, as follows:

$$ |\omega_j|=\frac{1}{p^{q-1}} \Big(\sign(\omega_j)\sum_{i=1}^n \alpha_i y_i x_{ij}\Big)^{q-1}, $$
and then,
 $$  \omega_j=\frac{1}{p^{q-1}} \sign(\omega_j) \Big(\sign(\omega_j)\sum_{i=1}^n \alpha_i y_i x_{ij}\Big)^{q-1}. $$
Observe that the above two expressions are well-defined for any $q \geq 1$, because by  \eqref{c1}, we have that
$\sign(\omega_j)\Big(\sum_{i=1}^n \alpha_i y_i x_{ij} \Big)\ge 0$.

Actually, by \eqref{c1} we have that
\begin{equation}\label{signo}
\sign(\omega_j) = \sign\left(\dsum_{i=1}^n \alpha_i y_i x_{ij}\right)=:\mathcal{S}_{\alpha,j}.
\end{equation}
Hence:
\begin{equation}\label{eq1}
\omega_j=\frac{1}{p^{q-1}} \mathcal{S}_{\alpha,j} \Big(\mathcal{S}_{\alpha,j}\sum_{i=1}^n \alpha_i y_i x_{ij}\Big)^{q-1}.
\end{equation}

Therefore, again the Lagrangian dual  function can be rewritten as:
 \begin{eqnarray*}
L(\alpha) &=&\left(\dfrac{1}{p^{q}}\right) \dsum_{j=1}^d \left(\left|\dsum_{i=1}^n \alpha_i y_i  x_{ij}\right|^{q-1}\right)^p - \left( \dfrac{1}{p^{q-1}}\right) \dsum_{i=1}^n \dsum_{j=1}^d
\alpha_i y_i x_{ij}\mathcal{S}_{\alpha,j} \Big(\mathcal{S}_{\alpha,j}\sum_{k=1}^n \alpha_k y_k x_{kj}\Big)^{q-1}+ \dsum_{i=1}^n \alpha_i\\
&= &
\left(\dfrac{1}{p^{q}}\right) \dsum_{j=1}^d \left|\dsum_{i=1}^n \alpha_i y_i  x_{ij}\right|^{q}
- \left(\dfrac{1}{p^{q-1}}\right)  \dsum_{j=1}^d \left|\dsum_{i=1}^n \alpha_i y_i  x_{ij}\right|^{q} + \dsum_{i=1}^n \alpha_i \\
&=& \left(\dfrac{1}{p^{q}}-\dfrac{1}{p^{q-1}} \right) \dsum_{j=1}^d \left|\dsum_{i=1}^n \alpha_i y_i  x_{ij}\right|^{q} + \dsum_{i=1}^n \alpha_i
\end{eqnarray*}
provided that $\dsum_{i=1}^n \alpha_i y_i =0$ and $0 \leq \alpha_i \leq C$,  $\forall i=1,\ldots,n$.

Thus, the Lagrangian dual problem may be formulated as  follows:
\begin{align}
\max  &\:\: \:  \left(\dfrac{1}{p^{q}} - \dfrac{1}{p^{q-1}}\right) \dsum_{j=1}^d \left|\dsum_{i=1}^n \alpha_i y_i  x_{ij}\right|^q + \dsum_{i=1}^n \alpha_i\label{of:lagrangean}\tag{${\rm P}_{\rm LD}$}\\
\mbox{ s.t. }&\:\: \: \dsum_{i=1}^n \alpha_i y_i =0,&\nonumber\\
&\:\: \: 0 \leq \alpha_i \leq C,\quad \forall i=1,\ldots,n. \nonumber&
\end{align}

Introducing the variables $\delta_j$ and $u_j$ to represent $\left| \dsum_{i=1}^n \alpha_i y_i x_{ij} \right|$ and $\left| \dsum_{i=1}^n \alpha_i y_i x_{ij} \right|^q$, taking into account that the coefficient
 $\left(\dfrac{1}{p^{q}} - \dfrac{1}{p^{q-1}}\right)$ is always negative for $1 < p, q < +\infty$ and considering $q=\frac{r}{s}$, the problem above is equivalent  to the following polynomial optimization problem:
\begin{align}
\max  &\:\: \:  \left(\dfrac{1}{p^{q}} - \dfrac{1}{p^{q-1}}\right) \dsum_{j=1}^d u_j + \dsum_{i=1}^n \alpha_i& &\label{pop:lag},\tag{${\rm POP}_{\rm LD}$}\\
s.t.\;  &\:\: \:  \delta_j \ge \dsum_{i=1}^n \alpha_i y_i x_{ij} , &\; \forall j=1,\ldots,d,\nonumber\\
& \:\: \: \delta_j \ge -\dsum_{i=1}^n \alpha_i y_i x_{ij} ,& \; \forall j=1,\ldots,d\nonumber\\
&\:\: \: u_j^s\ge \delta_j^r,\;  &\forall j=1,\ldots,d,\nonumber\\
&\:\: \: \dsum_{i=1}^n \alpha_i y_i =0,\nonumber\\
&\:\: \: 0 \leq \alpha_i \leq C,& \forall i=1,\ldots,n.\nonumber
\end{align}
\end{proof}

The reader may observe that the problem \eqref{pop:lag} simplifies further for the cases of integer  $q$ ($q=r$ and $s=1$), and especially if $r$ is even, which results in:
\begin{align}
\max  &\:\: \:   \left(\dfrac{1}{p^{q}} - \dfrac{1}{p^{q-1}}\right) \dsum_{j=1}^d  \delta_j + \dsum_{i=1}^n \alpha_i \nonumber \\
\mbox{ s.t. }  &\:\: \:  \delta_j \ge \left( \dsum_{i=1}^n \alpha_i y_i x_{ij} \right)^r, &\forall j=1,\ldots,d, \nonumber\\
&\:\: \: \dsum_{i=1}^n \alpha_i y_i =0,\nonumber \\
&\:\: \: 0 \leq \alpha_i \leq C,&  \forall i=1,\ldots,n. \nonumber
\end{align}
In order to extend the kernel theory developed for $\ell_2$-norm to a general $\ell_p$-norm, now we study an alternative formulation of the Lagrangian dual problem  in terms of linear functions and homogeneous polynomials in $\alpha$. For this analysis we consider the case where   $q=\frac{r}{s}$ with $s=1$. In order to simplify the proposed formulations we denote by ${\mathrm H}_\mathbf{y} = \{\alpha \in [0,C]^n: \sum_{i=1}^n \alpha_iy_i=0\}$, the feasible region  of \eqref{of:lagrangean} where the dual variables $\alpha$ belong to.

\begin{thm}\label{t:12} There exists an arrangement  of hyperplanes of $\mathbb{R}^n$, such that, in each of its full dimensional elements:
\begin{itemize}
\item[\rm i)]
\eqref{of:lagrangean} can be formulated as a mathematical programming problem  with  objective function  given by a linear  term 
plus a  homogeneous polynomial of degree $r$ on $\alpha$ and linear constraints.
 \item[\rm ii)]  \eqref{of:lagrangean}  and the separating hyperplane it induces depend on the original data throughout homogeneous polynomials.
\end{itemize}
\end{thm}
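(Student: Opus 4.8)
The plan is to isolate the only obstruction to polynomiality in \eqref{of:lagrangean}, namely the absolute values $\left|\dsum_{i=1}^n \alpha_i y_i x_{ij}\right|$, and to resolve their signs by partitioning the $\alpha$-space. For each fixed $j\in\{1,\ldots,d\}$ the expression $L_j(\alpha):=\dsum_{i=1}^n \alpha_i y_i x_{ij}$ is a \emph{linear} form in $\alpha\in\R^n$, so its zero set $H_j=\{\alpha\in\R^n: L_j(\alpha)=0\}$ is a (central) hyperplane of $\R^n$ whenever the coefficient vector $(y_i x_{ij})_{i}$ is nonzero; if some $L_j\equiv 0$, the corresponding term of the objective vanishes identically and may be discarded. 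The key observation is that the finite arrangement $\mathcal{A}=\{H_1,\ldots,H_d\}$ subdivides $\R^n$ into finitely many full-dimensional cells, and on the interior of each such cell the sign vector $(\mathcal{S}_{\alpha,1},\ldots,\mathcal{S}_{\alpha,d})\in\{-1,+1\}^d$ defined in \eqref{signo} is \emph{constant}; this is precisely the arrangement of hyperplanes announced in the statement.

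Fix one full-dimensional cell and let $\sigma_j\in\{-1,+1\}$ be the constant value of $\mathcal{S}_{\alpha,j}$ on it, so that $|L_j(\alpha)|=\sigma_j L_j(\alpha)$ throughout the cell. Since $s=1$ we have $q=r\in\Z_+$, whence
\[
\left|\dsum_{i=1}^n \alpha_i y_i x_{ij}\right|^q=\big(\sigma_j L_j(\alpha)\big)^{r}=\sigma_j^{\,r}\,L_j(\alpha)^{r}.
\]
Expanding $L_j(\alpha)^{r}$ by the multinomial theorem shows it is a homogeneous polynomial of degree $r$ in $\alpha$. Consequently, on the cell the objective of \eqref{of:lagrangean} equals $\dsum_{i=1}^n\alpha_i$ plus $\left(\tfrac{1}{p^{q}}-\tfrac{1}{p^{q-1}}\right)\dsum_{j=1}^d\sigma_j^{\,r}L_j(\alpha)^{r}$, i.e.\ a linear term plus a homogeneous polynomial of degree $r$. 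The feasible set inside the cell is described by the original constraints of $\mathrm{H}_\mathbf{y}$ together with the defining inequalities $\sigma_j L_j(\alpha)\ge 0$ of the cell, all of which are linear. This proves part (i).

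For part (ii) I would read off the coefficients produced above. By the multinomial expansion,
\[
L_j(\alpha)^{r}=\dsum_{|\gamma|=r}\binom{r}{\gamma}\Big(\dprod_{i=1}^n (y_i x_{ij})^{\gamma_i}\Big)\dprod_{i=1}^n\alpha_i^{\gamma_i},
\]
so each coefficient of the degree-$r$ objective is a sum over $j$ of degree-$r$ monomials in the data column $\mathrm{x}_{\cdot j}$; hence \eqref{of:lagrangean} depends on the input data only through homogeneous polynomials of degree $r$. For the separating hyperplane, recall from \eqref{eq1} that on the chosen cell $\mathcal{S}_{\alpha,j}=\sigma_j$ is constant, so that
\[
\omega_j=\frac{\sigma_j^{\,q}}{p^{q-1}}\,L_j(\alpha)^{q-1},
\]
a homogeneous polynomial of degree $r-1$ in $\alpha$ whose coefficients are, again by the multinomial theorem, degree-$(r-1)$ monomials in $\mathrm{x}_{\cdot j}$. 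Substituting into $\omega^t z=\dsum_{j=1}^d\omega_j z_j$ exhibits the induced separator as a data-dependent expression built from homogeneous polynomials, which establishes (ii).

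The genuinely substantive step is the first one: recognizing that the non-smooth, non-polynomial absolute values can be resolved \emph{globally} by a single central hyperplane arrangement in $\R^n$, turning the problem into a piecewise-polynomial one with homogeneous pieces of controlled degree. The remaining work is routine multinomial bookkeeping; the only points requiring care are the degenerate forms $L_j\equiv 0$ (which are simply discarded) and the fact that the sign coefficients $\sigma_j^{\,r}$ are harmless constants that affect neither the degree nor the homogeneity of the resulting polynomials.
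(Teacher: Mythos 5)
Your proposal is correct and follows essentially the same route as the paper's proof: the same central arrangement $\left\{\sum_{i=1}^n \alpha_i y_i x_{ij}=0,\ j=1,\ldots,d\right\}$, constancy of the sign vector on each full-dimensional cell, multinomial expansion to exhibit the homogeneous degree-$r$ objective with the cell inequalities as linear constraints, and the reconstruction of $\omega_j$ from \eqref{eq1} for the separator. The only (minor) point the paper settles that you gloss over is the intercept: it recovers $b$ via complementary slackness at an index $i_0$ with $0<\bar{\alpha}_{i_0}<C$, which shows $b$ itself also depends on the data through homogeneous polynomials, completing part ii); your handling of the degenerate forms $L_j\equiv 0$ is a small extra care the paper omits.
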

\begin{proof}
Using \eqref{signo}, the first addend of the objective function of  \eqref{of:lagrangean} can be rewritten as:
$$
\dsum_{j=1}^d \left|\dsum_{i=1}^n \alpha_i y_i  x_{ij}\right|^{ r} = \dsum_{j=1}^d \left(\mathcal{S}_{\alpha,j}\dsum_{i=1}^n \alpha_i y_i  x_{ij}\right)^{ r}.
$$
The above is a piecewise multivariate polynomial in $\alpha$   (recall  that  $y$ and $x$  are input data) with a finite number of ``branches"  induced by the different signs of the terms $\mathcal{S}_{\alpha,j}^r$ for all $j=1,\ldots,d$. Each branch is obtained fixing arbitrarily $\mathcal{S}_{\alpha,j}^r$ to +1 or -1. The domains of these branches are defined by the arrangement induced by the set of homogeneous hyperplanes $\left\{\sum_{i=1}^n \alpha_i y_i  x_{ij}=0,\; j=1,\ldots,d\right\}$. It is well-known that this arrangement has $O(2^d)$ full dimensional subdivision elements that we shall call \textit{cells},    see \cite{Edelsbrunner87}; all of them pointed, closed, convex  cones. Since a generic cell is univocally defined by the signs of the expressions $\dsum_{i=1}^n \alpha_i y_i  x_{ij}$ for $j=1, \ldots,d$,   denote by
$\C(\mathrm{s}_1,\ldots,\mathrm{s}_d)=\{\alpha\in \mathbb{R}^n: \mathcal{S}_{\alpha,j}=\mathrm{s}_j ,\; j=1,\ldots,d\}$,
with $\mathrm{s}_j\in \{-1,1\}$ for all $j=1, \ldots,d$. Next, for all $\alpha\in \C(\mathrm{s}_1,\ldots,\mathrm{s}_d)$ the signs are constant and this allows us to  remove the absolute value in the expression of the first addend of the objective function of \eqref{of:lagrangean}   and then to rewrite it as sum of monomials of the same degree. Indeed, denoting by $\mathrm{z}^\gamma  := z_{1}^{\gamma_1} \cdots z_{n}^{\gamma_n}$, for all $z=(z_1,\ldots,z_n) \in \mathbb{R}^n$ and $\gamma= (\gamma_1,\ldots,\gamma_n)\in \mathbb{N}^n$, we have the following  equalities for any $\alpha \in \C(\mathrm{s}_1,\ldots,\mathrm{s}_d)$:
\begin{align*}
 \dsum_{j=1}^d \hspace*{-0.1cm}\left|\dsum_{i=1}^n \alpha_i y_i  x_{ij}\right|^{ r}  \hspace*{-0.2cm}= &
 \hspace*{-0.1cm}\dsum_{j=1}^d \left(\mathrm{s}_{j} \hspace*{-0.15cm}\dsum_{i=1}^n \alpha_i y_i  x_{ij}\right)^{ r} \hspace*{-0.2cm}= \hspace*{-0.1cm}\dsum_{j=1}^d \left(\dsum_{\gamma \in \N^n_r}
 \mathrm{s}_{j}^r c_{\gamma} \alpha^{\gamma} y^{\gamma}   \mathrm{x}_{\cdot j}^{\gamma} \right) \hspace*{-0.15cm}= \hspace*{-0.15cm}\dsum_{\gamma \in \N^n_r}   c_{\gamma} \alpha^{\gamma} y^{\gamma} \hspace*{-0.1cm}\dsum_{j=1}^d  \mathrm{s}_{j}^r \mathrm{x}_{\cdot j}^{\gamma}
\end{align*}
where  $c_{\gamma} =   {\displaystyle {{ \sum_{i=1}^n \gamma_i}\choose{\gamma_1, \ldots, \gamma_n} } }= \dfrac{(\sum_{i=1}^n \gamma_i)!}{\gamma_1! \cdots \gamma_n!}$, and $\N^n_a:= \{\gamma \in \N^n: \sum_{i=1}^n \gamma_i = a\}$, for any $a \in \N$.

The above discussion justifies the validity of the following representation of \eqref{of:lagrangean} within the cone $\C(\mathrm{s}_1,\ldots,\mathrm{s}_d)$:
\begin{align}
\max \; &   \left(\dfrac{1}{p^{r}} - \dfrac{1}{p^{r-1}}\right) \dsum_{\gamma \in \N^n_r} c_{\gamma} \alpha^{\gamma} y^{\gamma} \dsum_{j=1}^d  \mathrm{s}_{j}^r \mathrm{x}_{\cdot j}^{\gamma} + \dsum_{i=1}^n \alpha_i\label{fo}\\
\mbox{s.t. } &  \mathrm{s}_{j} \dsum_{i=1}^n \alpha_i y_i x_{ij} \geq 0, \qquad \forall j=1, \ldots, d,\\
 & \dsum_{i=1}^n \alpha_i y_i =0,\\
 &0 \leq \alpha_i \leq C, \qquad \forall i=1,\ldots,n.\label{fin}
\end{align}

Finally,   let us deduce the expression of the separating hyperplane  as a function of the optimal solution of \eqref{of:lagrangean},  $\bar \alpha$. For a particular $z \in \mathbb{R}^d$ the separating hyperplane is $\mathcal{H} = \{(z_1, \ldots, z_d) \in \R^d: \sum_{j=1}^d \omega_{j} z_j+b=0\}$. Using \eqref{eq1}, this hyperplane is given by:
\begin{eqnarray*}
\nonumber
\sum_{j=1}^d \frac{1}{p^{r-1}} \mathcal{S}_{\bar \alpha,j}^{r} \Big(\sum_{i=1}^n \bar \alpha_i y_i x_{ij}\Big)^{r-1} z_{j}+b&=&0,\\
\end{eqnarray*}
where the signs are those associated to $\bar \alpha$. Equivalently,
$$
\frac{1}{p^{r-1}}
\dsum_{\gamma \in \N^n_{r-1}}
c_{\gamma} \bar \alpha^{\gamma} y^{\gamma} \sum_{j=1}^d \mathcal{S}_{\bar \alpha,j}^{r} \mathrm{x}_{\cdot j}^{\gamma} z_{j}+b=0.
$$
Finally, to compute $b$, for any $i_0 \in \{1, \ldots, n\}$ with $0 < \bar{\alpha}_{i_0} < C$, by the complementary slackness conditions we get that  we can also reconstruct the intercept of the hyperplane:
$$b = y_{i_0} - \dsum_{j=1}^d \bar{\omega}_j x_{i_0j} =  y_{i_0}- \frac{1}{p^{q-1}} \dsum_{j=1}^d \mathcal{S}_{\alpha,j} \Big(\mathcal{S}_{\bar{\alpha},j}\sum_{i=1}^n \bar{\alpha}_i y_i x_{ij}\Big)^{q-1} x_{i_0j},
$$
and the result follows.
\end{proof}
Observe that the even case can be seen as a particular case of the odd case in which a single arrangement is considered whose signs are all equal to one.

\begin{cor}
For even $r$,  the Lagrangian dual  problem, \eqref{of:lagrangean}, is given as:
\begin{equation}
\label{ast}
\max_{\alpha \in {\mathrm H}_\mathbf{y}} \; 
 \left(\dfrac{1}{p^{r}} - \dfrac{1}{p^{r-1}}\right) \dsum_{\gamma \in \N^n_r} c_{\gamma} \alpha^{\gamma} y^{\gamma} \dsum_{j=1}^d  \mathrm{x}_{\cdot j}^{\gamma} + \dsum_{i=1}^n \alpha_i.
\end{equation}
\end{cor}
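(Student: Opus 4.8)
The plan is to use the parity of $r$ to collapse the arrangement of cones from Theorem~\ref{t:12} into a single polynomial program. The essential observation is that for even $r$ the sign factors appearing in the cell-wise objective \eqref{fo} satisfy $\mathrm{s}_j^r = 1$ for every choice $\mathrm{s}_j \in \{-1,1\}$; equivalently, $|t|^r = t^r$ holds for all real $t$, so the absolute values in \eqref{of:lagrangean} may be removed \emph{globally} rather than only within a fixed cone.

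First I would substitute $\mathrm{s}_j^r = 1$ into \eqref{fo}. This immediately produces the objective
$$\left(\dfrac{1}{p^{r}} - \dfrac{1}{p^{r-1}}\right) \dsum_{\gamma \in \N^n_r} c_{\gamma} \alpha^{\gamma} y^{\gamma} \dsum_{j=1}^d  \mathrm{x}_{\cdot j}^{\gamma} + \dsum_{i=1}^n \alpha_i$$
inside \emph{every} cell $\C(\mathrm{s}_1, \ldots, \mathrm{s}_d)$. Since this polynomial is identical across all cells, the sign constraints $\mathrm{s}_j \sum_{i=1}^n \alpha_i y_i x_{ij} \geq 0$ no longer distinguish the subproblems, and maximizing cell by cell is equivalent to maximizing the common objective over the union of all cells. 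Because the cones $\C(\mathrm{s}_1,\ldots,\mathrm{s}_d)$ indexed by all sign vectors in $\{-1,1\}^d$ cover $\R^n$, intersecting this union with the constraints $\sum_{i=1}^n \alpha_i y_i = 0$ and $0 \leq \alpha_i \leq C$ recovers exactly ${\mathrm H}_\mathbf{y}$, yielding \eqref{ast}.

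I do not expect a genuine obstacle: the single delicate point is the legitimacy of discarding the cell constraints, which is justified precisely by the cell-invariance of the objective. For odd $r$ this fails, since then $\mathrm{s}_j^r = \mathrm{s}_j$ and the objective genuinely varies from cell to cell. This is consistent with the remark following Theorem~\ref{t:12} that the even case is the degenerate instance in which a single arrangement with all signs equal to $+1$ suffices.
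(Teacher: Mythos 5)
Your proof is correct and rests on exactly the same key fact as the paper's: for even $r$ one has $|t|^r = t^r$ (equivalently $\mathrm{s}_j^r = 1$ for every sign choice), so the sign patterns and the arrangement of hyperplanes from Theorem~\ref{t:12} become irrelevant. The paper states this more tersely, removing the absolute values globally and noting that the arrangement is not needed, while you reach the same conclusion by collapsing the cell-wise formulation \eqref{fo}--\eqref{fin}; the covering argument you add to justify discarding the cell constraints is valid and only makes explicit what the paper leaves implicit.
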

\begin{proof}
Note that if $r$ is even one has that:
$$
\dsum_{j=1}^d \left|\dsum_{i=1}^n \alpha_i y_i  x_{ij}\right|^{ r} = \dsum_{j=1}^d \left(\dsum_{i=1}^n \alpha_i y_i  x_{ij}\right)^{ r}.
$$
Hence, the arrangement of hyperplanes (and signs patterns) are not needed in this case and the result follows.
\end{proof}

\begin{rmk}\label{rmk1}
Observe that formulation \eqref{fo}-\eqref{fin} can be slightly modified to be valid for the case $q=\frac{r}{s}$ with $s\ne 1$ as follows:
 \begin{align*}%
\max  \:&  \left(\dfrac{1}{p^{q}} - \dfrac{1}{p^{q-1}}\right) \dsum_{j=1}^d \delta_j + \dsum_{i=1}^n \alpha_i\nonumber\\
\mbox{s.t. } \: &  \dsum_{\gamma \in \N_r^n}  c_{\gamma} \alpha^{\gamma} y^{\gamma} \mathrm{s}_{j}^r  \mathrm{x}_{\cdot j}^{\gamma}-\delta_j^{s}  \le 0,   &   \forall j=1,\ldots,d,\nonumber\\
        &  \mathrm{s}_{j} \dsum_{i=1}^n \alpha_i y_i x_{ij} \geq 0, &\forall j=1, \ldots, d, \nonumber\\
        & \dsum_{i=1}^n \alpha_i y_i =0,   \nonumber\\
        & 0 \le \alpha_i \le C,  &\forall i=1,\ldots,n, \nonumber\\
        &  \delta_i \ge 0, &\forall j=1,\ldots,d. \nonumber
\end{align*}
\end{rmk}

 The results in  Theorem \ref{t:12}, namely that  the Lagrangian problem \eqref{of:lagrangean} and the separating hyperplane it induces depend on the original data throughout homogeneous polynomials;  is the basis to introduce the concept of multidimensional kernel that extends further the kernel trick already known for the SVM problem with Euclidean distance.  This is the aim of the following section.

\section{Multidimensional Kernels}\label{sec:3}
As mentioned in the introduction, when the linear separation between two sets is not clear in the original space, a common technique in supervised classification is to embed the data in a space of higher dimension where this separation may be easier. If we consider
$\Phi: \R^{d} \rightarrow \R^{D}$, a transformation on the original data, the expressions of the Lagrangian dual problem and the separating hyperplane of these transformed data would depend on the  function $\Phi$. In this sense, the increase of the dimension of the space  would be translated in  an increase of the difficulty to tackle the resulting problem. However, when the $\ell_2$-norm is used, the so-called \emph{kernel trick} provides expressions of the Lagrangian dual problem and the separating hyperplane that just depend on the so-called Kernel function.
Basically, the idea behind the kernel trick is to use a Kernel function to handle transformations on the data, and incorporate them to the SVM problem, without the explicit knowledge of the transformation function.
Therefore, although  implicitly we are solving a problem in a higher dimension, the resulting problem is stated in the dimension of the original data and as a consequence, it has  the same difficulty than the original one. Our goal in this section is to extend this idea of the kernel trick to  $\ell_p$-SVM.  In order to do that,
consider a data set $[\mathbf{x}] = (x_{1\cdot}, \ldots, x_{n\cdot})$ together with their classification pattern $\mathbf{y}=(y_1, \ldots, y_n)$ and $r \in \N$.
Given $\Phi: \R^{d} \rightarrow \R^{D}$, the set-valued function ${\rm S}_{\Phi}: 2^{\mathrm{H}_\mathbf{y}} \rightarrow 2^{\{-1,1\}^D}$ ($2^{\mathrm{H}_\mathbf{y}}$ stands  for the power set of  $\mathrm{H}_\mathbf{y}$), is defined as:
\begin{align*}
 {\rm S}_{\Phi}(R):= \Big\{&\mathrm{s} \in \{-1,1\}^D: \mathrm{s}_j =\sign\left(\dsum_{i=1}^n \alpha_i y_i \Phi_j(\mathrm{x}_{i\cdot})\right)^r,  \\
  &\mbox{ for } j=1, \ldots, D, \mbox{ for some } \alpha \in R\Big\}.
\end{align*}

In what follows, we say that  the family of sets  $\{R_k\}_{k \in \mathcal{K}} \subseteq 2^{\mathrm{H}_\mathbf{y}}$  is  a \emph{subdivision} of  $\mathrm{H}_{\mathbf{y}}$  if: (1) $\mathcal{K}$ is finite; and (2) $\bigcup_{k \in \mathcal{K}} R_k=\mathrm{H}_{\mathbf{y}}$ and
$\mathrm{ri}(R_k) \cap \mathrm{ri}(R_{k'})=\emptyset$ for any $k,k' (k\ne k') \in \mathcal{K}$ (where $\mathrm{ri}(R)$ stands for the relative interior of a set $R$).

\begin{defn}
Given a transformation function, $\Phi: \R^{d} \rightarrow \R^{D}$,
a subdivision $\{R_k\}_{k \in \mathcal{K}}$  is said a \emph{suitable $\Phi$-subdivision} of $\mathrm{H}_{\mathbf{y}}$ if
$$
{\rm S}_{\Phi}(R_k) = \{\mathrm{s}_{R_k}\} \text{ for some } \mathrm{s}_{R_k}\in \{-1,1\}^D \text{ and for all } k \in \mathcal{K}.
$$
\end{defn}

Observe that  the signs of $\dsum_{i=1}^n \alpha_i y_i \Phi_j(\mathrm{x}_{i\cdot})$, for $j=1, \ldots, D$, are constant within any element $R_k$ of a suitable  $\Phi$-subdivision. Hence, any finer subdivision of a suitable  $\Phi$-subdivision remains suitable.
Also, one may construct the maximal subdivision of $\mathrm{H}_{\mathbf{y}}$ with such a property by defining:
$$
\C(\mathrm{s}_1, \ldots, \mathrm{s}_D) = \Big\{\alpha \in {\mathrm H}_\mathbf{y}: \sign\Big( \sum_{i=1}^n \alpha_i y_i \Phi_j(\mathrm{x}_{i\cdot}) \Big)^r = \mathrm{s}_{j}, \mbox{ for } j=1, \ldots, D\Big\}
$$
for any $\mathrm{s} \in \{-1,1\}^D$, and choosing  $\{R_k\}_{k\in \mathcal{K}} = \Big\{\C(\mathrm{s}_1, \ldots, \mathrm{s}_D)\Big\}_{\mathrm{s} \in \{-1,1\}^D}$  (observe that  each set of  this subdivision is defined
univocally by a vector $\mathrm{s} \in \{-1,1\}^D$).

\begin{defn}
Given a suitable  $\Phi$-subdivision, $\{R_k\}_{k \in \mathcal{K}} \subseteq 2^{\mathrm{H}_\mathbf{y}}$, and $(\gamma,\lambda) \in \N_r^{n+1}$,
 $\lambda \in \{0,1\}$, the operator
 \begin{equation}\label{kernel0}
\Ker[\mathbf{x}]_{R_k,\gamma,\lambda} (z):= \sum_{j=1}^D
\mathrm{s}_{R_k,j}^r \Phi_j(\mathrm{x})^{\gamma} \Phi_j(z)^{\lambda}, \forall z \in \R^d,  \forall k \in \mathcal{K},
\end{equation}
is  called a $r$-order Kernel  function of $\Phi$.   For $k\in \mathcal{K}$, $\Ker[\mathbf{x}]_{R_k,\gamma,\lambda} (z)$ is called the $k$-th slice of the kernel function. 
\end{defn}

The reader can observe that the objective  function of \eqref{of:lagrangean}  and the separating hyperplane 
obtained as a result of solving this problem can be rewritten for the $\Phi$-transformed data using the Kernel function \eqref{kernel0}.

Indeed, using \eqref{fo} the objective function of the  Lagrangian dual problem when using $\Phi(\mathrm{x})$ instead of $\mathrm{x}$ is:
\begin{eqnarray*}
 \left(\dfrac{1}{p^{r}} - \dfrac{1}{p^{r-1}}\right) \dsum_{\gamma \in \N^n_r} c_{\gamma} \alpha^{\gamma} y^{\gamma}   \Ker[\mathbf{x}]_{R_k,\gamma,0} (z)+ \sum_{i=1}^n \alpha_i,   \quad \forall \alpha \in R_k, \:
  \forall k \in \mathcal{K}.
 \end{eqnarray*}
Since the separating hyperplane is built for  $\alpha^* \in R_{k^*}$, the optimal solution of an optimization problem,  the expression \eqref{ast} of this hyperplane is given by:
$$
 \frac{1}{p^{r-1}}
\dsum_{\gamma \in \N_{r-1}^{n}} c_{\gamma} \alpha^{* \gamma} y^{\gamma} \Ker[\mathbf{x}]_{R_{k^*},\gamma,1} (z)+b=0,\qquad  \; \mbox{ for }  k^*\in \mathcal{K} \mbox{ such that } \alpha^* \in R_{k^*}.
$$

\begin{rmk}

The general definition of kernel simplifies whenever $r$ is even. In such a case, the sign coefficients are no longer needed. Hence, $\{\mathrm{H}_\mathbf{y}\}$ (with $|\mathcal{K}|=1$) is a suitable $\Phi$-subdivision of $\mathrm{H}_\mathbf{y}$
for any transformation $\Phi: \R^d \rightarrow \R^D$. Then, the kernel function becomes:
$$
\Ker[\mathbf{x}]_{\mathrm{H}_\mathbf{y},\gamma,\lambda} (z):= \sum_{j=1}^D
\Phi_j(\mathrm{x})^{\gamma} \Phi_j(z)^{\lambda}, \qquad \forall z \in \R^d,
$$
for $(\gamma,\lambda) \in \N_r^{n}$ and $\lambda \in \{0,1\}$, but being it independent of $\alpha$ (since ${\rm S}_{\Phi}(\mathrm{H}_\mathbf{y}) =\{(1,\stackrel{D}{\ldots},1)\}$).
\end{rmk}

\begin{rmk}
For the Euclidean case ($r=2$), note that usual definition of kernel is $K(z, z^\prime) = \Phi(z)^t \Phi(z^\prime)$ which is independent of the observations. Nevertheless, such an expression is only partially exploited in its application to the SVM problem. For solving the dual problem, $K$ is
applied to pairs of observations, i.e., only through $K( \mathrm{x}_{i_1\cdot},\mathrm{x}_{i_2\cdot})$ for $i_1, i_2=1, \ldots, n$, whereas for classifying an arbitrary  observation $z$, the unique expressions to be evaluated are of the form $K(\mathrm{x}_{i\cdot},z)$.

Thus,  the kernel for the Euclidean case can be expressed:
$$
 K(\mathrm{x}_{i_1\cdot}, \mathrm{x}_{i_2\cdot})=\Phi( \mathrm{x}_{i_1\cdot})^t  \Phi(\mathrm{x}_{i_2\cdot}) =\Ker[\mathbf{x}]_{\mathrm{H}_\mathbf{y},\gamma,0} (z) , \quad \forall z\in \R^d
$$
for $(\gamma,\lambda)={\rm e}_{i_1} + {\rm e}_{i_2}$, $i_1, i_2=1, \ldots, n$, with $\lambda=0$, and
$$
 K(\mathrm{x}_{i_1\cdot}, z)=\Phi(\mathrm{x}_{i_1\cdot})^t \Phi(z)  =\Ker[\mathbf{x}]_{\mathrm{H}_\mathbf{y},\gamma,1} (z), \quad \forall z\in \R^d
$$
\noindent for $(\gamma,\lambda)={\rm e}_{i_1}+{\rm e}_{n+1}$, $i_1=1, \ldots, n$, where $\mathrm{e}_j$ denotes the $j$-th canonical $(n+1)$-dimensional vector, for $j=1,\ldots, n$.
The above discussion shows that the standard Euclidean kernel is a particular case of our multidimensional kernel.
\end{rmk}

The following example illustrates the construction of the kernel operator for a given  transformation $\Phi$.

\begin{ex}\label{ex:2}
Let us consider six  points $[\mathbf{x}]=\Big((0,0), (0,1), (1,0)$, $(1,1)$, $(1,-1)$, $(-1,1)\Big)$ on the plane with patterns $\mathbf{y}=(1,1,1,-1,-1,-1)$. The points are drawn in Figure \ref{fig:1} where the 1-class points are identified with filled dots while the $-1$-class is identified with circles.  Clearly, the classes  are not linearly separable.

 Consider  the transformation  $\Phi: \R^2 \rightarrow \R^3$, defined as
$$
\Phi(x_1,x_2)=(x_1^2, \sqrt[r]{2}x_1x_2, x_2^2), \quad \forall (x_1, x_2) \in \R^2.
$$
\begin{figure}[h]
\begin{center}
\begin{tikzpicture}[scale=1.5]

\coordinate(X1) at (0,0);
\coordinate(X2) at (0,1);
\coordinate(X3) at (1,0);
\coordinate(X4) at (1,1);
\coordinate(X5) at (1,-1);
\coordinate(X6) at (-1,1);

\draw[latex-latex, thin, draw=gray] (-1.5,0)--(1.5,0) node [right] {$x_1$}; 
 \draw[latex-latex, thin, draw=gray] (0,-1.5)--(0,1.5) node [above] {$x_2$}; 

\foreach \Point in {(X1), (X2), (X3)}{
    \node at \Point {\textbullet};
}

\foreach \Point in {(X6),(X4),(X5)}{
    \node at \Point {$\circ$};
}

\end{tikzpicture}
\end{center}
\caption{Points of Example \ref{ex:2} and their classification patterns. \label{fig:1}}
\end{figure}
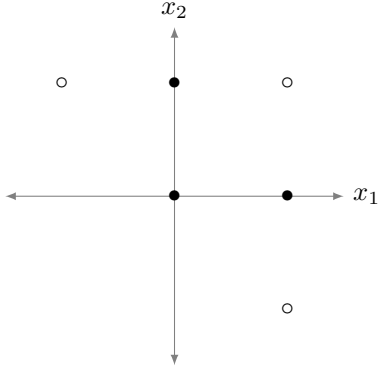

Mapping the six points  using $\Phi$, we get that, for any nonnegative integer $r$, the signs appearing at the kernel expressions are: $\sign(\alpha_3-\alpha_4-\alpha_5-\alpha_6)$, $\sign(-\sqrt[r]{2} \alpha_4 + \sqrt[r]{2} \alpha_5 + \sqrt[r]{2} \alpha_6)$ and $\sign( \alpha_2- \alpha_4 - \alpha_5 -\alpha_6)$. Since  $\mathrm{H}_\mathbf{y} = \{\alpha \in \R^6_+: \alpha_1+\alpha_2+\alpha_3-\alpha_4-\alpha_5-\alpha_6=0\}$, we get that the signs can be simplified to:
\begin{itemize}
\item $\sign(\alpha_3-\alpha_4-\alpha_5-\alpha_6) = \sign(-\alpha_1-\alpha_2)=-1$, if $\alpha_1+\alpha_2 >0$ and $1$ otherwise,
\item $\sign(-\sqrt[r]{2} \alpha_4 + \sqrt[r]{2} \alpha_5 + \sqrt[r]{2} \alpha_6) = \sign(\alpha_5+\alpha_6-\alpha_4)$.
\item $\sign(\alpha_2- \alpha_4 - \alpha_5 -\alpha_6) = \sign(-\alpha_1-\alpha_3)=-1$,   if $\alpha_1+\alpha_3 >0$ and $1$ otherwise.
\end{itemize}
 Observe that the cases where the argument within the $sign$ function is zero do not affect the formulations since the corresponding factor is null.
 Hence, only the expressions for the signs when $\alpha_1+\alpha_2 >0$ (in the first item) and $\alpha_1+\alpha_3>0$ (in the third item) are considered.

For odd $r$ (in which the $r$-th power of the signs above coincide with the signs themselves), we define the suitable subdivision $\{R_1, R_2\}$, where:
$$
R_1 = \{\alpha\in \mathrm{H}_\mathbf{y}: \alpha_5 + \alpha_6 \geq \alpha_4\} \mbox{ and } R_2 = \{\alpha\in \mathrm{H}_\mathbf{y}: \alpha_5 + \alpha_6 \leq \alpha_4\}.
$$
Note that $ {\rm S}_{\Phi}(R_1)=\{(-1,1,-1)\}$ while ${\rm S}_{\Phi}(R_2)= \{(-1,-1,-1)\}$,  i.e,  $\{R_1,R_2\}$ is a suitable $\Phi$-subdivision.

Thus,
$$
\Ker[\mathbf{x}]_{R_k,\gamma, \lambda} (z) = \left\{
\begin{array}{cl}
-\Phi_1(\mathrm{x})^{\gamma}\Phi_1(z)^\lambda+\Phi_2(\mathrm{x})^{\gamma}\Phi_2(z)^\lambda-\Phi_3(\mathrm{x})^{\gamma}\Phi_3(z)^\lambda, & \mbox{if $k=1$,}\\
-\Phi_1(\mathrm{x})^{\gamma}\Phi_1(z)^\lambda-\Phi_2(\mathrm{x})^{\gamma}\Phi_2(z)^\lambda-\Phi_3(\mathrm{x})^{\gamma}\Phi_3(z)^\lambda, & \mbox{if $k=2$},\end{array}\right.
$$
being then:
$$
\Ker[\mathbf{x}]_{R_k,\gamma, \lambda} (z) = \left\{
\begin{array}{cl}
-\left(\mathrm{x}_{\cdot 1}^\gamma z_{1}^\lambda - \mathrm{x}_{\cdot 2}^\gamma z_{2}^\lambda\right)^2, & \mbox{if $k=1$,}\\
-\left(\mathrm{x}_{\cdot 1}^\gamma z_{1}^\lambda + \mathrm{x}_{\cdot 2}^\gamma z_{2}^\lambda\right)^2, & \mbox{if $k=2$}.\end{array}\right.
$$
For even $r$, because the $r$-th power of the signs do not affect to the expressions, the $r$-order Kernel function of $\Phi$ is given by
$$
\Ker[\mathbf{x}]_{R_k,\gamma, \lambda} (z) =\left(\mathrm{x}_{\cdot 1}^\gamma z_{1}^\lambda + \mathrm{x}_{\cdot 2}^\gamma z_{2}^\lambda\right)^2,
$$
for $k=1, 2$, $(\gamma,\lambda) \in \N_r^{n+1}$ and $\lambda \in \{0,1\}$.
\hfill $\square$
\end{ex}

\subsection{Multidimensional Kernels and higher-dimensional tensors}

Given a subdivision $\{R_k\}_{k \in \mathcal{K}}$  of $\mathrm{H}_\mathbf{y}$ and a set of functions $\{\Ker[\mathbf{x}]_{R_k,\gamma,\lambda}\}_{k\in \mathcal{K}}$, for any $(\gamma,\lambda) \in \N_r^{n+1}$ with $\lambda \in \{0,1\}$,
 the \textit{critical} question in this section is the existence of $D \in \Z_+$ and
 $\Phi: \mathbb{R}^d \longrightarrow \mathbb{R}^D$ such that, $\{R_k\}_{k \in \mathcal{K}}$ is a suitable $\Phi$-subdivision and
\begin{eqnarray*}
\Ker[\mathbf{x}]_{R_k,\gamma,\lambda} (z):= \sum_{j=1}^D
\mathrm{s}_{R_k,j}^r \Phi_j(\mathrm{x})^{\gamma} \Phi_j(z)^{\lambda}, \quad \forall z \in \R^d,
\end{eqnarray*}
where $ {\rm S}_{\Phi}(R_k)=\{s_{R_k}\}$ with $s_{R_k} \in \{-1,1\}^D$.

 For the sake of simplicity in the formulations, each of the elements of the $\Phi$-suitable subdivision of  $\mathrm{H}_\mathbf{y}$ will be denoted as follows:
$$
R_k = \{\alpha \in \R^n: M^k_j \alpha \geq 0, j=1, \ldots, m_k\},
$$
 where $M^k_j \in \R^{n}$, for $k\in \mathcal{K}$ and $j=1,\dots,m_k$.

First of all, using that $\Ker[\mathbf{x}]_{R_k,\gamma,\lambda} (z)$ is  a $r$-order Kernel
function of $\Phi$, the problem \eqref{fo}-\eqref{fin} for a transformation of the original
data via $\Phi$, in each element,  $k \in \mathcal{K}$,  of a suitable $\Phi$-subdivision can be written  as:
\begin{align}
\label{fo11}
\max \; & F_k(\alpha):=
\left(\dfrac{1}{p^{r}} - \dfrac{1}{p^{r-1}}\right) \dsum_{\gamma \in \N^n_r} c_{\gamma} \alpha^{\gamma} y^{\gamma}   \Ker[\mathbf{x}]_{R_k,\gamma,0} (z)+ \sum_{i=1}^n \alpha_i\\
\mbox{s.t. } \: & \tilde{g}^k_{j}(\a) :=
      M^k_j \alpha \geq 0, \qquad \forall j=1, \ldots, m_k,\\
       & \tilde{\ell}_0(\a):=     \dsum_{i=1}^n \alpha_i y_i =0, \\
       & \tilde{\ell}_i(\a):=
       C - \alpha_i\ge 0,   \qquad \forall i=1,\ldots,n,\\
\label{fin11}
       &   \tilde{\ell}_{n+i} (\a):=\a_i\ge 0, \qquad \quad \forall i=1,\ldots,n.
\end{align}
Observe that the problem above  is a reformulation of the Lagrangian dual problem, \eqref{of:lagrangean}, for the  $\Phi$-transformed data that only depends on the original data via the $r$-order Kernel function of $\Phi$ and the suitable $\Phi$-subdivision,  and it can be seen as an extension of the kernel trick to $\ell_p$-norms with $p > 1$.

In the particular case where $\Phi$ is the identity transformation,  the above formulation  becomes \eqref{fo}-\eqref{fin} whenever  the suitable $\Phi$-subdivision consists of  the full dimensional elements of the  arrangement of hyperplanes $\{\sum_{i=1}^n \alpha_i y_i x_{ij}=0, \,j=1,\ldots,d\}$.
Furthermore, observe that  $F_k$ does not depend on $z$, since the degree, $\lambda$, of such a value is zero in that function.

We shall connect the above mentioned \textit{critical} question with some interesting  mathematical objects, \textit{real symmetric tensors}, that are built upon the given data set $[\mathbf{x}]$ and $\mathbf{y}$. It will become clear, after Theorem \ref{conditionskernel}, that existence of a kernel operator is closely related with rank-one decompositions of the above mentioned tensors.

Recall that a real $r$-th order $m$-dimensional symmetric tensor, $\mathbb{L}$, consists of $m^r$ real entries $\mathbb{L}_{j_1\ldots j_r} \in \R$ such that $\mathbb{L}_{j_1\ldots j_r} = \mathbb{L}_{j_{\sigma(1)} \ldots {j_{\sigma(1)}}}$, for any permutation  $\sigma$ of $\{1, \ldots, r\}$.

\begin{lem}
Let $\Phi: \mathbb{R}^d \longrightarrow \mathbb{R}^D$, $\hat z\in \R$ and let $\mathcal{S}=\{R_k\}_{k \in \mathcal{K}}$ be a suitable $\Phi$-subdivision of $\mathrm{H}_\mathbf{y}$. Then, the $k$-th slice of any $r$-order kernel function of $\Phi$ at $\hat z$, induces a real $r$-th order $(n+1)$-dimensional symmetric tensor.
\end{lem}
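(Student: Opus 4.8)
The plan is to realize the $k$-th slice as a signed sum of rank-one symmetric tensors, by treating the evaluation point $\hat z$ as an auxiliary $(n+1)$-th observation so that the data multi-index $\gamma$ and the point-index $\lambda$ merge into a single multi-index over $n+1$ slots. Concretely, I would fix $k \in \mathcal{K}$ and $\hat z$, and for each $j \in \{1, \ldots, D\}$ introduce the augmented vector
$$
\Phi^{\hat z}_{j} := \big(\Phi_j(\mathrm{x}_{1\cdot}), \ldots, \Phi_j(\mathrm{x}_{n\cdot}), \Phi_j(\hat z)\big) \in \R^{n+1},
$$
whose first $n$ entries carry the data dependence governed by $\gamma$ and whose last entry carries the point dependence governed by $\lambda$.

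Next I would define the candidate tensor $\mathbb{L}$ of order $r$ and dimension $n+1$ entrywise by
$$
\mathbb{L}_{i_1 \ldots i_r} := \dsum_{j=1}^D \mathrm{s}_{R_k,j}^r \, (\Phi^{\hat z}_j)_{i_1} \cdots (\Phi^{\hat z}_j)_{i_r}, \qquad i_1, \ldots, i_r \in \{1, \ldots, n+1\},
$$
equivalently $\mathbb{L} = \sum_{j=1}^D \mathrm{s}_{R_k,j}^r (\Phi^{\hat z}_j)^{\otimes r}$, a real linear combination of symmetric rank-one tensors. The order $r$ and dimension $n+1$ are immediate from the construction, and symmetry is essentially free: each product $(\Phi^{\hat z}_j)_{i_1}\cdots(\Phi^{\hat z}_j)_{i_r}$ is invariant under every permutation $\sigma$ of its indices because real multiplication is commutative, so each rank-one summand is symmetric and hence so is the finite real combination $\mathbb{L}$.

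It then remains to verify that $\mathbb{L}$ genuinely encodes the $k$-th slice. For a multi-index $(i_1, \ldots, i_r)$, let $\gamma_i$ count the occurrences of $i$ among $i_1,\ldots,i_r$ for $i \le n$, and let $\lambda$ count the occurrences of $n+1$; then $(\gamma,\lambda) \in \N_r^{n+1}$ and, grouping equal factors,
$$
\mathbb{L}_{i_1 \ldots i_r} = \dsum_{j=1}^D \mathrm{s}_{R_k,j}^r \Big(\dprod_{i=1}^n \Phi_j(\mathrm{x}_{i\cdot})^{\gamma_i}\Big) \Phi_j(\hat z)^{\lambda} = \dsum_{j=1}^D \mathrm{s}_{R_k,j}^r \Phi_j(\mathrm{x})^{\gamma} \Phi_j(\hat z)^{\lambda},
$$
which for $\lambda \in \{0,1\}$ is exactly $\Ker[\mathbf{x}]_{R_k,\gamma,\lambda}(\hat z)$ as in \eqref{kernel0}. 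Thus the slice values occupy precisely the tensor entries with $\lambda \le 1$, and $\mathbb{L}$ is the symmetric tensor they induce, its remaining entries being the natural continuation of the same rank-one data.

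The only delicate point, and the step I would treat with care, is exactly this index mismatch: the slice is indexed by $(\gamma,\lambda)$ with $\lambda \in \{0,1\}$, whereas a symmetric tensor of order $r$ carries entries for every $\lambda \in \{0,\ldots,r\}$. Hence the slice does not by itself single out a unique tensor; what it pins down is the rank-one data $\{\Phi^{\hat z}_j\}_{j=1}^D$ together with the weights $\mathrm{s}_{R_k,j}^r$, and $\mathbb{L}$ is the symmetric tensor generated by that data. I would state this explicitly so that the meaning of \emph{induces} is unambiguous, and remark that this construction is precisely the rank-one decomposition anticipated just before the lemma, which is what the converse direction (Theorem~\ref{conditionskernel}) later exploits.
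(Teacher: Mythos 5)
Your proof is correct, but it is organized differently from the paper's, in a way worth noting. The paper works entrywise: it \emph{defines} the tensor by $\K^k_{i_1\ldots i_r} := \Ker[\mathbf{x}]_{R_k,\gamma,\lambda}(\hat z)$, where $(\gamma,\lambda)$ records how many times each index appears in $(i_1,\ldots,i_r)$, and then proves symmetry by checking that a permutation of the multi-index leaves $(\gamma,\lambda)$ unchanged; the relation to the slice is then true by definition. You work in the opposite direction: you define $\mathbb{L}=\sum_{j=1}^D \mathrm{s}_{R_k,j}^r\,(\Phi^{\hat z}_j)^{\otimes r}$ from the augmented vectors $\Phi^{\hat z}_j=(\Phi_j(\mathrm{x}_{1\cdot}),\ldots,\Phi_j(\mathrm{x}_{n\cdot}),\Phi_j(\hat z))$, so symmetry is automatic (a real combination of symmetric rank-one tensors), and the work consists in checking that the entries with $\lambda\in\{0,1\}$ reproduce \eqref{kernel0}. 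Your route buys two things. First, it handles unambiguously the entries whose multi-index contains $n+1$ with multiplicity $\lambda\ge 2$: the paper's second case, read literally, assigns such entries the value $\Ker[\mathbf{x}]_{R_k,\gamma_1,1}(\hat z)$ ``with $\lambda=1$'', which is not consistent with $(\gamma_1,\lambda)=\sum_{l}{\rm e}_{i_l}$ when $n+1$ occurs more than once, and so leaves those entries only loosely specified; your rank-one continuation $\sum_{j}\mathrm{s}_{R_k,j}^r\Phi_j(\mathrm{x})^{\gamma}\Phi_j(\hat z)^{\lambda}$ defines every entry and still agrees with the slice wherever the slice is defined. Second, your construction exhibits explicitly the rank-one decomposition (weights $\mathrm{s}_{R_k,j}^r$, vectors $\Phi^{\hat z}_j$) that Theorem \ref{conditionskernel} later takes as its hypothesis, so the forward and converse directions of the kernel--tensor correspondence become visibly inverse to one another, a connection the paper's entrywise definition leaves implicit. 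Your closing caveat is also accurate: since the slice prescribes only the $\lambda\le 1$ entries, the induced symmetric tensor is not unique, and the lemma is genuinely an existence statement; either extension (yours or the paper's) proves it.
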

\begin{proof}
Let us define the following set of $(n+1)^r$ real numbers:
$$
\K^k_{i_1\ldots i_r} = \left\{\begin{array}{cl}
\Ker[\mathbf{x}]_{R_k,\gamma_0,0} (\hat{z}), &\mbox{if $i_j < n+1$, $\forall j=1,\ldots,r$,}\\
\Ker[\mathbf{x}]_{R_k,\gamma_1,1} (\hat{z}), & \mbox{if there exists $s\in \{1, \ldots, r\}$ such that $i_s=n+1$.}
	\end{array}\right.
$$
being $(\gamma_0,\lambda)=\sum_{l=1}^r  {\rm e}_{i_l}$ with $\lambda=0$ and $(\gamma_1,\lambda)=\sum_{l=1}^r  {\rm e}_{i_l}$ with $\lambda=1$ .

Let us check whether the above tensor is symmetric. Let $\sigma$ be a permutation of the indices. For $(i_1, \ldots, i_r)$, which comes from a particular choice of $(\gamma, \lambda)$, if $\sigma$ is applied to $(i_1, \ldots, i_r)$, the resulting $(\gamma^\prime,\lambda^\prime)$ becomes:
$$
(\gamma^\prime,\lambda^\prime) = \hspace*{-0.1cm} \left\{\begin{array}{cl}
\dsum_{l=1}^r  {\rm e}_{{\sigma(l)}}, &\mbox{if $i_{\sigma(i)} < n+1, \forall i$,}\\
\dsum_{l=1}^r  {\rm e}_{{\sigma(l)}},  & \mbox{if $\exists s: i_{\sigma(s)}=n+1$}
\end{array}\right. \hspace*{-0.2cm}=\left\{\begin{array}{cl}
\dsum_{l=1}^r  {\rm e}_{i_l}, &\mbox{if $i_{i} < n+1, \forall i$,}\\

\dsum_{l=1}^r  {\rm e}_{i_l},  & \mbox{if $\exists s:i_s=n+1$}
\end{array}\right. = (\gamma,\lambda)
$$
Hence, $\mathbb{K}_{i_1\ldots i_r} = \mathbb{K}_{i_{\sigma(1)} \ldots i_{\sigma(r)}}$, since the multi-indices constructed from $(\gamma,\lambda)$ and $(\gamma^\prime,\lambda^\prime)$ coincide.
\end{proof}

Let us now denote by $\otimes$ the tensor product, i.e. $v \otimes w = (v_i\, w_j)_{i,j=1}^m$ for any $v, w \in \R^m$.

\begin{lem}[\cite{comon:hal-00327599}]\label{lemma:common}
Let $\mathbb{K}$ be a real $r$-order $(n+1)$-dimensional symmetric tensor. Then, there exists $\widehat{D} \in \N$,  $v_1, \ldots, v_{h} \in \R^{n+1}$ and $\psi_j \in \R$ $\forall j=1,\ldots,{\widehat{D}}$,
such that $\mathbb{K}$ can be decomposed as
\begin{equation*}\label{decomp}
\mathbb{K} = \dsum_{j=1}^{\widehat{D}} \psi_j \; v_j \otimes \stackrel{r}{\cdots} \otimes v_j \,  .
\end{equation*}
 That is, $\K_{i_1\cdots i_r} = \dsum_{j=1}^{\widehat{D}} \psi_jv_{j i_1} \cdots v_{j i_r}$ for any $i_1, \ldots, i_r \in \{1, \ldots,n+1\}$. Such a decomposition is said a rank-one tensor decomposition of $\mathbb{K}$. The minimum $\widehat{D}$ that assures such a decomposition is the symmetric tensor rank and $\psi_1, \ldots,  \psi_{\widehat{D}}$ are its eigenvalues.
\end{lem}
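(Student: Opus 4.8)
The plan is to identify symmetric tensors with homogeneous polynomials and to reduce the statement to the classical fact that $r$-th powers of linear forms span the space of degree-$r$ forms. To a real $r$-order $(n+1)$-dimensional symmetric tensor $\K$ I would associate the homogeneous polynomial of degree $r$ in the variables $x=(x_1,\ldots,x_{n+1})$ given by $P_{\K}(x) = \dsum_{i_1,\ldots,i_r} \K_{i_1\ldots i_r} x_{i_1}\cdots x_{i_r}$. Because $\K$ is symmetric, this assignment is a linear bijection between the space of such tensors and the space of degree-$r$ forms in $n+1$ variables (the coefficient of each monomial $x^\gamma$ is a nonzero multinomial multiple of the common entry on that index multiset). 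Moreover, a rank-one symmetric tensor $v \otimes \stackrel{r}{\cdots} \otimes v$ corresponds exactly to the $r$-th power $(v^t x)^r$ of the linear form $v^t x$. Under this dictionary the sought decomposition $\K = \dsum_j \psi_j \, v_j \otimes \stackrel{r}{\cdots} \otimes v_j$ is precisely a representation $P_{\K}(x) = \dsum_j \psi_j (v_j^t x)^r$, so it suffices to prove that every degree-$r$ form admits such a real linear combination of powers.

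For that, the key step is to show that the rank-one symmetric tensors $\{v \otimes \stackrel{r}{\cdots} \otimes v : v \in \R^{n+1}\}$ span the whole space of symmetric tensors. I would obtain this explicitly from the polarization identity. Since the decomposable tensors span the full tensor space, their symmetrizations span the symmetric subspace, and it is therefore enough to express each symmetrized product $\mathrm{Sym}(v_1 \otimes \cdots \otimes v_r)$ as a real combination of pure powers. This is furnished by the formula
\[
\mathrm{Sym}(v_1 \otimes \cdots \otimes v_r) = \frac{1}{2^r r!}\dsum_{\epsilon \in \{-1,1\}^r} \Big(\dprod_{l=1}^r \epsilon_l\Big) \Big(\dsum_{l=1}^r \epsilon_l v_l\Big)^{\otimes r},
\]
whose right-hand side is manifestly a finite real linear combination of rank-one symmetric tensors $w \otimes \stackrel{r}{\cdots} \otimes w$ with $w=\dsum_l \epsilon_l v_l \in \R^{n+1}$. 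Verifying this identity amounts to expanding the power on the right and summing over $\epsilon$: a tuple of indices contributes a nonzero alternating sum only when each of the $r$ vectors appears an odd number of times, which, since the multiplicities sum to $r$, forces each multiplicity to equal one; the surviving terms are exactly the permutations of $(v_1,\ldots,v_r)$ and reproduce $r!$ times the symmetrization.

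Combining the two steps yields the existence of a finite decomposition with real coefficients: writing $\K$ as a linear combination of symmetrized products and substituting the polarization identity produces $\K = \dsum_{j} \psi_j \, v_j \otimes \stackrel{r}{\cdots} \otimes v_j$ for suitable $v_j \in \R^{n+1}$ and $\psi_j \in \R$. The existence of a \emph{minimal} such $\widehat{D}$ then follows at once from the well-ordering of $\N$, and the terms ``symmetric tensor rank'' and ``eigenvalues'' are merely labels attached to this minimal decomposition.

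The main obstacle is the spanning step, and specifically guaranteeing that \emph{real} coefficients $\psi_j$ (allowed to be negative) suffice: over $\R$ a form need not be a sum of pure $r$-th powers when $r$ is even, so a naive positive combination would fail. The polarization identity resolves this automatically, since the prefactor $\dprod_l \epsilon_l$ produces signed coefficients. An alternative to the explicit identity would be a duality argument: a linear functional $L$ annihilating every $(v^t x)^r$ must vanish, because $(v^t x)^r = \dsum_{\gamma \in \N^{n+1}_r} c_\gamma \, v^\gamma x^\gamma$ makes $L((v^t x)^r)$ a polynomial in $v$ whose vanishing for all $v$ forces $L(x^\gamma)=0$ for every $\gamma$ by the linear independence of the monomials $v^\gamma$; hence the powers span and the decomposition exists.
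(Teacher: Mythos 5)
Your proof is correct, but it takes a genuinely different route from the paper: the paper offers no proof of this lemma at all, importing it as a known result from Comon, Golub, Lim and Mourrain \cite{comon:hal-00327599}, where it is established through the same identification of symmetric tensors with homogeneous forms that you set up. Your contribution is a self-contained proof of the key spanning step via the polarization identity
\[
\mathrm{Sym}(v_1 \otimes \cdots \otimes v_r) \;=\; \frac{1}{2^r\, r!}\sum_{\epsilon \in \{-1,1\}^r} \Bigl(\prod_{l=1}^r \epsilon_l\Bigr)\Bigl(\sum_{l=1}^r \epsilon_l v_l\Bigr)^{\otimes r},
\]
and your verification is exactly right: only index tuples in which every $v_l$ appears an odd number of times survive the signed sum over $\epsilon$, and since the multiplicities total $r$ this forces all of them to equal one, leaving $2^r$ copies of each permutation term. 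You also correctly isolate the real subtlety — over $\R$ with $r$ even one cannot hope for nonnegative coefficients, and it is precisely the signed weights $\prod_l \epsilon_l$ (or, in your alternative, the duality argument showing the powers $(v^t x)^r$ span) that make the decomposition work with $\psi_j$ of arbitrary sign, which is all the lemma claims. What the paper's citation buys is the connection to the established theory of symmetric tensor rank, which the authors rely on later (NP-hardness of rank-one decompositions, decomposition algorithms); what your proof buys is self-containedness and even an explicit, if exponentially large, construction bounding $\widehat{D}$ by $2^r$ powers per symmetrized basis tensor. One last point you handle correctly: calling the coefficients of a minimal decomposition ``eigenvalues'' is a label adopted by the statement itself, so treating the minimality of $\widehat{D}$ and this terminology as definitional, rather than as something to prove, is the right reading.
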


The following result extends the classical Mercer's Theorem \cite{mercer} to $r$-order Kernel functions.
\begin{thm}\label{conditionskernel}
Let $\{R_k\}_{k\in\mathcal{K}}$ be a subdivision of $\mathrm{H}_\mathbf{y}$
and $\mathbb{K}^k$, for $k\in \mathcal{K}$, be a $r$-order $(n+1)$-dimensional symmetric tensor such that each $\mathbb{K}^k$ can be decomposed as:
$$
\mathbb{K}^k = \dsum_{j=1}^{\widehat{D}} \psi_{kj} v_j \otimes \stackrel{r}{\cdots} \otimes v_j, \: \forall k \in \mathcal{K},
$$
and satisfying, either
\begin{enumerate}
\item $r$ is even and $\psi_{j}:= \psi_{kj} \geq 0$, or
\item $r$ is odd and $\psi_j:= |\psi_{kj}|$ and for all $k\in \mathcal{K}$:
$$
\sign(\psi_{kj}) = \sign\Big( \sum_{i=1}^n  \alpha_i y_i \sqrt[r]{\psi_j} v_{ji} \Big), \text{ for all } \alpha \in \R_k.
$$
\end{enumerate}
Then, there exists a transformation $\Phi$, such that $\{R_k\}_{k\in \mathcal{K}}$ is a $\Phi$-suitable subdivision of $\mathrm{H}_\mathrm{y}$ and $\{\mathbb{K}^k\}_{k\in \mathcal{K}}$ induces a $r$-order kernel function of $\Phi$.
\end{thm}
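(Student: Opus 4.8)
The plan is to build the transformation $\Phi$ directly from the common rank-one vectors $v_1,\ldots,v_{\widehat D}\in\R^{n+1}$ supplied by Lemma \ref{lemma:common}, and then to verify the two asserted properties. I would take $D=\widehat D$ and, writing $v_j=(v_{j1},\ldots,v_{j,n+1})$, define each coordinate $\Phi_j$ by prescribing its values at the data points and at the evaluation point $\hat z$ that is used to form the induced tensor, namely
\begin{equation*}
\Phi_j(\mathrm{x}_{i\cdot}):=\sqrt[r]{\psi_j}\,v_{ji}\quad(i=1,\ldots,n),\qquad \Phi_j(\hat z):=\sqrt[r]{\psi_j}\,v_{j,n+1},
\end{equation*}
extending $\Phi_j$ to the remaining points of $\R^d$ arbitrarily. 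This is legitimate because $\psi_j\ge 0$ in both cases, so the $r$-th roots are well defined, and because only the values at $\mathrm{x}_{1\cdot},\ldots,\mathrm{x}_{n\cdot},\hat z$ enter the induced kernel tensor.

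Next I would check that $\{R_k\}_{k\in\mathcal{K}}$ is a suitable $\Phi$-subdivision. With the above choice one has $\dsum_{i=1}^n\alpha_iy_i\Phi_j(\mathrm{x}_{i\cdot})=\sqrt[r]{\psi_j}\dsum_{i=1}^n\alpha_iy_iv_{ji}$, so the sign that $\Phi$ assigns in coordinate $j$ on $R_k$ is governed precisely by the quantity appearing in the hypotheses. In the even case $(\sign(\cdot))^r\equiv 1$, so ${\rm S}_{\Phi}(R_k)=\{(1,\ldots,1)\}$ automatically. In the odd case the sign hypothesis states exactly that $\sign\big(\dsum_i\alpha_iy_i\sqrt[r]{\psi_j}v_{ji}\big)=\sign(\psi_{kj})$ for every $\alpha\in R_k$; this both fixes the value and shows it is constant on $R_k$, so ${\rm S}_{\Phi}(R_k)$ is the singleton whose $j$-th entry is $\mathrm{s}_{R_k,j}=\sign(\psi_{kj})$, and suitability follows.

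It then remains to show that the symmetric tensor induced by the kernel of this $\Phi$ coincides entrywise with $\mathbb{K}^k$. Using the recipe of the previous lemma together with the definition of $\Phi$, a direct computation gives, for any indices $i_1,\ldots,i_r\in\{1,\ldots,n+1\}$,
\begin{equation*}
\K^k_{i_1\ldots i_r}=\dsum_{j=1}^{\widehat D}\mathrm{s}_{R_k,j}^r\Big(\sqrt[r]{\psi_j}\Big)^r v_{ji_1}\cdots v_{ji_r}=\dsum_{j=1}^{\widehat D}\big(\mathrm{s}_{R_k,j}^r\,\psi_j\big)\,v_{ji_1}\cdots v_{ji_r},
\end{equation*}
where the factors coming from an index equal to $n+1$ are supplied by $\Phi_j(\hat z)$. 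The whole matching thus reduces to the single scalar identity $\mathrm{s}_{R_k,j}^r\,\psi_j=\psi_{kj}$: in the even case $\mathrm{s}_{R_k,j}^r=1$ and $\psi_j=\psi_{kj}$, while in the odd case $\mathrm{s}_{R_k,j}^r=\sign(\psi_{kj})$ and $\psi_j=|\psi_{kj}|$, so the product equals $\psi_{kj}$ in either situation. Substituting this back recovers $\mathbb{K}^k_{i_1\ldots i_r}=\dsum_j\psi_{kj}v_{ji_1}\cdots v_{ji_r}$, which is the given decomposition, completing the verification.

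The algebraic substitutions are routine; the delicate point I expect to be the main obstacle is the sign bookkeeping in the odd case. One must track the two nested $r$-th powers --- the power inside the definition of ${\rm S}_{\Phi}$ and the outer $\mathrm{s}_{R_k,j}^r$ appearing in the kernel --- and confirm they collapse to the correct $\sign(\psi_{kj})$; and one must use the sign hypothesis not merely to pin down the value of $\mathrm{s}_{R_k,j}$ but, equally importantly, to guarantee that this value does not vary with $\alpha\in R_k$, which is exactly what upgrades $\{R_k\}$ from a subdivision to a \emph{suitable} $\Phi$-subdivision.
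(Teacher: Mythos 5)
Your proposal is correct and follows essentially the same route as the paper's own proof: the identical construction $\Phi_j(\mathrm{x}_{i\cdot})=\sqrt[r]{\psi_j}\,v_{ji}$, $\Phi_j(\hat z)=\sqrt[r]{\psi_j}\,v_{j,n+1}$, the same suitability check (trivial signs for even $r$, the sign hypothesis fixing $\mathrm{s}_{R_k,j}=\sign(\psi_{kj})$ constantly on $R_k$ for odd $r$), and the same entrywise matching of the induced kernel with $\mathbb{K}^k$. Your reduction of both cases to the single identity $\mathrm{s}_{R_k,j}^r\,\psi_j=\psi_{kj}$, and your explicit remark that $\Phi$ may be extended arbitrarily off the data points, are slightly tidier presentations of steps the paper carries out implicitly, but the argument is the same.
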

\begin{proof}
Let $z \in \R^d$ and define $\Phi: \R^d \rightarrow  \R^{\widehat{D}}$ as:
$$
\left\{\begin{array}{ccl}
\Phi_j(\mathrm{x}_{i\cdot}) &=&  \sqrt[r]{\psi_j} v_{ji},  \mbox{ for } i=1, \ldots, n,\\
\Phi_j(\;z\;) &=& \sqrt[r]{\psi_j} v_{j,n+1},
\end{array}\right. \mbox{ for $j=1, \ldots, \widehat{D},$}
$$
which is well defined because of the nonnegativity of the eigenvalues $\psi_j$.

\begin{itemize}
\item
Let us assume first that $r$ is even.
 Note that, since $r$ is even and $\{R_k\}_{k\in \mathcal{K}}$ is a suitable subdivision, the latest is also a  suitable $\Phi$-subdivision (actually, for any $\Phi$), since the signs are always positive (or the sign function is always 1).

Hence, for $(\gamma,\lambda)= \sum_{l=1}^r{\rm e}_{i_l}$,
$$
\Ker[\mathbf{x}]_{R_k,\gamma, \lambda} (z)   = \sum_{j=1}^{\widehat{D}}
\Phi_j(\mathrm{x})^{\gamma} \Phi_j(z)^{\lambda} = \dsum_{j=1}^{\widehat{D}}  \psi_j v_{j i_1} \cdots v_{j i_r} = \mathbb{K}^k_{i_1\ldots i_r}, 
$$

is a $r$-order kernel function of $\Phi$.
\item Assume that  $r$ is odd.
Observe that $\sign\Big( \sum_{i=1}^n  \alpha_i y_i \Phi_j(\mathrm{x}_{i\cdot})\Big)$\\ $= \sign\Big( \sum_{i=1}^n  \alpha_i y_i \sqrt[r]{\psi_j} v_{ji} \Big) = \sign(\psi_{kj})$, being then 
$$
{\rm S}_{\Phi}(R_k)=\{(\sign(\psi_{k1}),\ldots, \sign(\psi_{k\widehat{D}}))\}.
$$
Thus, we get that  $\{R_k\}_{k\in \mathcal{K}}$ is a  suitable $\Phi$-subdivision of $\mathrm{H}_\mathbf{y}$.

Also, because $\psi_{kj} = \sign(\psi_{kj}) \psi_j$ and
$\sign(\psi_{kj}) = \sign\Big( \sum_{i=1}^n  \alpha_i y_i \sqrt[r]{\psi_j} v_{ji} \Big)$, for all $\alpha \in \R_k$, we get that:
\begin{eqnarray*}
\Ker[\mathbf{x}]_{R_k,\gamma, \lambda} (z) &=&  \sum_{j=1}^{\widehat{D}} \sign\Big( \sum_{i=1}^n \alpha_i y_i  \Phi_j(\mathrm{x}_{i\cdot}) \Big)^r \Phi_j(\mathrm{x})^{\gamma} \Phi_j(z)^{\lambda}\\
&=& \sum_{j=1}^{\widehat{D}} \sign\Big( \sum_{i=1}^n \alpha_i y_i   \sqrt[r]{\psi_j} v_{ji} \Big)^r  \psi_{j} v_{j i_1} \cdots v_{j i_r}\\
&=&\mathbb{K}^k_{i_1\ldots i_r}
\end{eqnarray*}

for $(\gamma,\lambda)=\sum_{l=1}^r {\rm e}_{i_l}$. Hence, $\mathbb{K}^k$ induces a $k$th-slice of a $r$-order kernel function of $\Phi$.
\end{itemize}

\end{proof}

The decomposition of symmetric $2$-order $n$-dimensional tensors ($n\times n$ symmetric matrices) provided in Lemma \ref{lemma:common}, is equivalent to eigenvalue decomposition \cite{eckartand-young} and the symmetric tensor rank coincides with the usual rank of a matrix. Hence, for $r=2$ (Euclidean case), the conditions of Theorem \ref{conditionskernel} reduce to check positive semidefiniteness of the induced kernel matrix (Mercer's Theorem). On the other hand, computing rank-one decompositions of higher-dimensional symmetric tensors is known to be NP-hard, even for symmetric $3$-order tensors \cite{hillar-lim2013}. Actually, there is no finite algorithm to compute, in general, the rank one decompositions of general symmetric tensors. In spite of that, several algorithms have been proposed to perform such a decomposition. One commonly used strategy finds approximations to the decomposition by sequentially increasing the dimension of the transformed space ($\widehat{D}$). Specifically, one fixes a dimension $\widehat{D}$ and finds $v$ and $\psi$ that minimize $\|\K - \sum_{j=1}^{\widehat{D}} \psi_{kj} v_j \otimes \stackrel{r}{\cdots} \otimes v_j\|_2^2$. Next, if a zero-objective value is obtained, a tensor decomposition is found; otherwise, $\widehat{D}$ is increased and the process is repeated. The interested reader referred to \cite{als,asd,kofidis} for further information about algorithms for  decomposing real
symmetric tensors.

 In some  interesting cases, the assumptions of Theorem \ref{conditionskernel} are proved to be verified by some general classes of tensors. In particular, even order $P$ tensors, $B$ tensors, $B_0$ tensors, diagonally dominated tensors, positive Cauchy tensors and sums-of-squares (SOS) tensors are known to have all their eigenvalues nonnegative (the reader is referred to \cite{SOStensors,Btensors} for the definitions and results on
 these families of tensors). Thus, several classes of  multidimensional kernel functions can be easily constructed.  For instance, if $r$ is even and we assume that all $\mathrm{x}_{i\cdot}\neq \mathbf{0}$, for all $i=1,\ldots, n+1$, it is well-known that the symmetric $r$-order $(n+1)$-dimensional tensor, $\mathbb{K}$, with entries:
$$
\mathbb{K}_{i_1\ldots i_r} = \dfrac{1}{\|\mathrm{x}_{i_1\cdot}\|+\cdots+\|\mathrm{x}_{i_r\cdot}\|}, \qquad i_1, \ldots, i_r=1, \ldots, n+1,
$$
for some norm $\|\cdot\|$ in $\R^d$, is a Cauchy-shaped tensor. Next, $\mathbb{K}$ is  positive semidefinite~\cite{Cauchytensor}, since $\|\mathrm{x}_{i\cdot}\| > 0$, for all $i=1,\ldots, n$. Hence, by Theorem  \ref{conditionskernel}, it induces a $r$-order kernel function.

\section{Solving the $\ell_p$-SVM problem}\label{sec:4}

By Theorem \ref{t:12}, \eqref{of:lagrangean} can be rewritten as  a polynomial optimization problem, i.e., as a global optimization problem and then, in general, it is NP-hard. In spite of that we can approximately solve moderate size instances using a modern optimization technique based on the Theory of Moments that allows building a sequence of semidefinite  programs whose solutions converge, in the limit, to the optimal solution of the original problem \cite{lasserrebook}. We use it to derive upper bounds for the Lagrangian dual problem \eqref{of:lagrangean}.

Let us denote by $\R[\a]$ the ring of real polynomials in the variables $\a=(\a_1$, $\ldots,\a_n)$,  for $n \in \N$ ($n \geq 1$), and by $\R[\a]_r \subset \R[\a]$ the space of polynomials of degree at most $r \in \N$. We also denote by $\mathcal{B} = \{\a^\gamma: \gamma\in\N^n\}$ a canonical basis of monomials for $\R[\a]$, where $\a^\gamma = \a_1^{\gamma_1} \cdots \a_n^{\gamma_n}$, for any $\gamma \in \N^n$.  Note that $\mathcal{B}_r = \{\a^\gamma \in \mathcal{B}: \sum_{i=1}^n \gamma_i \leq r\}$ is a basis for $\R[\a]_r$.

For any sequence indexed in the canonical monomial basis $\mathcal{B}$, $\mathbf{w}=(\w_\gamma)_{\gamma \in \N^n}\subset\R$, let $\L_\mathbf{w}:\R[\a]\rightarrow\R$ be the linear functional defined, for any $f=\sum_{\gamma\in\N^n}f_\gamma\,\a^\gamma \in \R[\a]$, as $\L_\mathbf{w}(f) :=
\sum_{\gamma\in\N^n}f_\gamma\,\w_\gamma$.

The \textit{moment} matrix $\Mo_r(\mathbf{w})$ of order $r$ associated with $\mathbf{w}$, has its rows and columns indexed by  the elements in the basis  $\mathcal{B}_r$
and for two elements in such a basis, $b_1=\a^\gamma, b_2=\a^\beta$, $\Mo_r(\mathbf{w})(b_1,b_2) = \Mo_r(\mathbf{w})(\gamma,\beta)\,:=\,\L_\mathbf{w}(\a^{\gamma+\beta})\,=\,\omega_{\gamma+\beta}$, for $\vert\gamma\vert,\,\vert\beta\vert\,\leq r$ (here $|a|$ stands for the sum of the coordinates of $a \in \N^n$). Note that the moment matrix of order $r$ has dimension $\bfrac{n+r}{n}\times\bfrac{n+r}{n}$ and that the number of $\w_\gamma$ variables is $\bfrac{n+2r}{n}$.

For $g = \sum_{\zeta \in \N^n}  g_{\zeta} \a^\zeta \in \R[\a]$, the \textit{localizing} matrix $\Mo_r(g \mathbf{w})$ of order $r$ associated with $\mathbf{w}$ and $g$, has its rows and columns indexed by the elements in $\mathcal{B}$ and for $b_1=\a^\gamma$, $b_2=\a^\beta$,
$\Mo_r(g\mathbf{w})(b_1, b_2) = \Mo_r(g\mathbf{w})(\gamma,\beta):=\L_\mathbf{w}(\a^{\gamma+\beta}g(\a))=\sum_{\zeta}g_\zeta \w_{\zeta+\gamma+\beta}$, for
 $\vert\gamma\vert,\vert\beta\vert\,\leq r$.
Observe that a different choice for the basis of $\R[\a]$, instead of the standard monomial basis, would give different moment and localizing matrices, although the results would be also valid.

The main assumption to be imposed when one wants to assure convergence of some SDP relaxations for solving polynomial optimization problems  is known as the Arquimedean property (see for instance \cite{lasserrebook}) and it is a consequence  of Putinar's results~\cite{putinar}.
The importance of Archimedean property
stems from the link between such a condition with the  positive semidefiniteness of the moment and localizing matrices  (see \cite{putinar}).

We built a hierarchy of SDP relaxations \textit{`a la Lasserre'}  for solving the dual problem. We observe that some constraints in this problem are already semidefinite (linear) therefore it is not mandatory to create their associated localizing constraints although its inclusion reinforces the relaxation values.

In the following result we state the semidefinite programming relaxations of the Lagrangean dual problem \eqref{fo11}-\eqref{fin11}. Obviously, this result can be easily  applied to problem \eqref{fo}-\eqref{fin} when we consider that $\Phi$ is the identity transformation.

\begin{thm}\label{thm:sdp}
Let $r \in \Z_+$ and $\{R_k\}_{k\in \mathcal{K}}$ be a suitable $\Phi$-subdivision of $\mathrm{H}_\mathbf{y}$. Let $t\ge t_0=\lceil \frac{r}{2}\rceil$, and
\begin{align*}
\rho_t^k= \inf _{\mathbf{w}}\; & \L_\mathbf{\mathbf{w}}(-F_k)\\
\quad \mbox{s.t. } & \Mo_t(\mathbf{w})\succeq 0,\\
 &\Mo_{t-1}(\tilde{g}^k_{j} \mathbf{w})\succeq 0, \;  j=0,\ldots, m_k, \\
 &\Mo_{t-1}(\tilde{\ell}_i \mathbf{w})\succeq 0, \; i=0,\ldots, n, \\
  &\Mo_{t-1}(\tilde{\ell}_{n+i} \mathbf{w})\succeq 0, \; i=0,\ldots, n, \\
 &\L_\mathbf{w}(\w_\mathbf{0})=1.
\end{align*}
Then, the sequence $\{\rho_t^k\}_{t\ge t_0}$ of optimal values of the hierarchy of problems above satisfies
$$ \lim_{t\rightarrow +\infty} -\rho_t^k \downarrow \max_{\a \in\R^n_+}F_k(\a).$$
\end{thm}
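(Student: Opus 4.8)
The plan is to recognize \eqref{fo11}--\eqref{fin11} as polynomial optimization over a compact basic semialgebraic set and then invoke the standard moment--sum-of-squares hierarchy, the only problem-specific ingredient being the verification of the \emph{Archimedean} condition. Write $\Omega_k\subset\R^n$ for the feasible set cut out by the constraints $\tilde g^k_j$, $\tilde\ell_0$, $\tilde\ell_i$ and $\tilde\ell_{n+i}$, so that the right-hand side of the statement, the maximum of $F_k$ over the feasible region, equals $F_k^\ast:=\max_{\a\in\Omega_k}F_k(\a)$; this maximum is attained because $\Omega_k\subset[0,C]^n$ is compact and $F_k$ is a polynomial, hence continuous. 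First I would pass to the measure reformulation $F_k^\ast=\sup\{\int F_k\,d\mu:\mu\in\mathcal{P}(\Omega_k)\}$, the supremum being attained by a Dirac measure at a maximizer. Identifying a measure with its truncated moment sequence $\mathbf{w}$, so that $\int F_k\,d\mu=L_{\mathbf{w}}(F_k)$, and relaxing the requirement that $\mathbf{w}$ be a genuine moment sequence to the necessary positive semidefiniteness of the moment and localizing matrices together with $w_{\mathbf 0}=1$, yields exactly the program defining $\rho_t^k$ (stated as the minimization of $L_{\mathbf{w}}(-F_k)$). Since every actual moment sequence is feasible, $-\rho_t^k=\sup_{\mathbf{w}}L_{\mathbf{w}}(F_k)\ge F_k^\ast$ for all $t\ge t_0$; and since at level $t+1$ each matrix has its level-$t$ analogue as a principal submatrix, a feasible sequence for level $t+1$ restricts to a feasible one for level $t$, so the optimal values decrease, $-\rho^k_{t+1}\le-\rho^k_t$. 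This already gives the inequality and the monotone decrease $\downarrow$ in the statement.

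The crucial remaining point is to turn $-\rho_t^k\ge F_k^\ast$ into convergence, for which I would apply Putinar's Positivstellensatz; this needs the quadratic module $M$ generated by the constraints of $\Omega_k$ to be Archimedean. Here I would use the box constraints directly rather than appending a redundant ball constraint. The key is the identity
\[
C^2-\a_i^2 \;=\; C\,(C-\a_i)\;+\;\tfrac{1}{C}\,(C-\a_i)^2\,\a_i\;+\;\tfrac{1}{C}\,\a_i^2\,(C-\a_i),
\]
whose three summands are each a sum-of-squares polynomial (the constant $C>0$ and the squares $\tfrac1C(C-\a_i)^2$, $\tfrac1C\a_i^2$) multiplied by one of the generators $C-\a_i$ or $\a_i$; hence $C^2-\a_i^2\in M$ for each $i$, and summing over $i$ gives $nC^2-\sum_{i=1}^n\a_i^2\in M$. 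This membership bounds $\|\a\|^2$ inside $M$, so $M$ is Archimedean and Putinar's theorem applies.

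Finally I would run the standard convergence argument on the dual sum-of-squares side: for each $\varepsilon>0$ the polynomial $F_k^\ast+\varepsilon-F_k$ is strictly positive on $\Omega_k$, so by Putinar it belongs to $M$, and truncating its representation fixes a level $t(\varepsilon)$ at which the sum-of-squares relaxation has value at most $F_k^\ast+\varepsilon$; weak duality between the moment and sum-of-squares relaxations then forces $-\rho_t^k\le F_k^\ast+\varepsilon$ for all $t\ge t(\varepsilon)$. Together with $-\rho_t^k\ge F_k^\ast$ this yields $-\rho_t^k\downarrow F_k^\ast$, as claimed. I expect the main obstacle to be exactly this Archimedean verification: the generators are the separate affine pieces $\a_i\ge0$ and $C-\a_i\ge0$, so one works with a quadratic module (not the full preordering) and mere compactness of $\Omega_k$ does not suffice --- the explicit certificate $nC^2-\sum_i\a_i^2\in M$ must be produced by hand. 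A secondary technicality is the correct moment treatment of the linear equality $\tilde\ell_0=\sum_i\a_iy_i=0$, which should enter as a two-sided (equality) localizing condition $\Mo_{t-1}(\tilde\ell_0\mathbf{w})=0$ rather than as a one-sided inequality.
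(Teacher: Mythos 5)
Your proof is correct, and it rests on the same underlying machinery as the paper's: the paper's entire proof is a short appeal to Lasserre's theorem (Theorem 5.6 of the cited book), noting that the feasible set lies in $[0,C]^n$, asserting that compactness ``therefore'' yields the Archimedean property, and concluding convergence. Where you genuinely diverge is in unpacking that black box, and your version is more rigorous on precisely the point the paper hand-waves. Compactness of the feasible set does \emph{not} by itself imply that the quadratic module generated by the given constraint polynomials is Archimedean (the implication the paper's wording suggests is false in general), so a certificate is needed; your identity
\begin{equation*}
C^2-\a_i^2 \;=\; C\,(C-\a_i)\;+\;\tfrac{1}{C}\,(C-\a_i)^2\,\a_i\;+\;\tfrac{1}{C}\,\a_i^2\,(C-\a_i),
\end{equation*}
which is algebraically correct and exhibits each summand as an SOS multiple of a generator, gives $nC^2-\sum_{i=1}^n\a_i^2$ in the module and discharges the hypothesis honestly for the box constraints at hand. (For the record, the paper's conclusion is salvageable because Archimedeanity is known to hold for modules generated by linear constraints defining a polytope, but no such argument appears there.) Your explicit weak-duality bound $-\rho_t^k\ge F_k^\ast$, the monotonicity via principal submatrices, and the $\varepsilon$-truncation of the Putinar representation are exactly what the cited theorem packages, so these steps buy self-containedness rather than new content. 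Finally, your closing technical remark is a genuine catch the paper misses: as stated, the hierarchy imposes only the one-sided condition $\Mo_{t-1}(\tilde{\ell}_0\mathbf{w})\succeq 0$ on the equality constraint $\sum_i\a_iy_i=0$, which in the limit enforces only $\sum_i\a_iy_i\ge 0$; to guarantee convergence to the maximum over the set with the equality one should impose the two-sided (equality) localizing condition $\Mo_{t-1}(\tilde{\ell}_0\mathbf{w})=0$, or equivalently include $-\tilde{\ell}_0$ among the generators. In this respect your write-up is more careful than the original proof.
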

\begin{proof}
We apply Lasserre's hierarchy of  SDP relaxations to approximate the dual global optimization problem. The feasible domain is compact since $\a$ belongs to a closed and bounded set (recall that, in particular, $\a \in \mathrm{H}_\mathrm{y} \cap R_k$, so $\a \in [0,C]^n$) and therefore it satisfies the Archimedean property \cite{putinar}. Next, the maximum degree of the polynomials involved in the problem is $r$. Therefore we can apply \cite[Theorem 5.6]{lasserrebook} with relaxation orders $t\ge t_0:=\lceil r/2\rceil$ to conclude that the sequence, $\{-\rho_t^k\}_{t\ge t_0}$, of optimal values of the SDP relaxations, in the statement of the theorem, converges to $\dmax_{\a\in\R_+^n} F_k(\a)$, the optimal value of the Lagrangian problem.
\end{proof}
 In many cases the convergence ensured by the above theorem is attained in a finite number of steps and it can be certified by a sufficient condition called the \textit{rank condition} \cite[Theorem 6.1]{lasserrebook}, implying that the optimal $\alpha$-optimal values can be extracted.

\begin{ex}\label{ex:3}
Let us illustrate the proposed moment-SDP methodology to the toy instance of Example \ref{ex:2}. Recall that  we apply, for odd $r$, the kernel function $\Ker[\mathbf{x}]_{R_k,\gamma, \lambda} (z)$ given by:
$$
\Ker[\mathbf{x}]_{R_k,\gamma, \lambda} (z) = \left\{
\begin{array}{cl}
-\left(\mathrm{x}_{\cdot 1}^\gamma z_{1}^\lambda - \mathrm{x}_{\cdot 2}^\gamma z_{2}^\lambda\right)^2, & \mbox{if $k=1$,}\\
-\left(\mathrm{x}_{\cdot 1}^\gamma z_{1}^\lambda + \mathrm{x}_{\cdot 2}^\gamma z_{2}^\lambda\right)^2, & \mbox{if $k=2$}.\end{array}\right.
$$
where $R_1 = \{\alpha\in \mathrm{H}_\mathbf{y}: \alpha_5 + \alpha_6 \geq \alpha_4\}$ and  $R_2 = \{\alpha\in \mathrm{H}_\mathbf{y}: \alpha_5 + \alpha_6 \leq \alpha_4\}$.

For $r=3, s=1$, i.e., when $p=\frac{3}{2}$ and $q=3$, the following two problems have to be solved:
\begin{align*}
\max &\left(\dfrac{1}{\left(\frac{3}{2}\right)^{3}} - \dfrac{1}{\left(\frac{3}{2}\right)^{2}}\right)  \dsum_{\gamma \in \N^6_3} c_{\gamma} \alpha^{\gamma} y^{\gamma}   \Ker[\mathbf{x}]_{R_k,\gamma,0} (z)+ \sum_{i=1}^6 \alpha_i\\
\mbox{\rm s.t. } &  \alpha \in \mathrm{H}_{\mathrm{y}} \cap R_k.
\end{align*}
for $k=1, 2$.

For the $1$-th slide of the suitable subdivision, $R_1$, the problem is explicitly expressed as:
\begin{align*}
\max \; & F_1(\alpha) = {\small\frac{-4}{27}}  \left( -\alpha_2^3+3\alpha_2^2\alpha_4+3\alpha_2^2\alpha_5+3\alpha_2^2\alpha_6-3\alpha_2\alpha_4^2-6\alpha_2\alpha_4\alpha_5-6\alpha_2\alpha_4\alpha_6-\right.\\
&3\alpha_2\alpha_5^2-6\alpha_2\alpha_5\alpha_6-3\alpha_2\alpha_6^2-\alpha_3^3+3\alpha_3^2\alpha_4+3\alpha_3^2\alpha_5+3\alpha_3^2\alpha_6-3\alpha_3\alpha_4^2-6\alpha_3\alpha_4\alpha_5-\\
&6\alpha_3\alpha_4\alpha_6-3\alpha_3\alpha_5^2-6\alpha_3\alpha_5\alpha_6-3\alpha_3\alpha_6^2+12\alpha_4^2\alpha_5+12\alpha_4^2\alpha_6+4\alpha_5^3+12\alpha_5^2\alpha_6+\\
&\left.12\alpha_5\alpha_6^2+4\alpha_6^3\right)+( \alpha_1+\alpha_2+\alpha_3+\alpha_4+\alpha_5+\alpha_6)\\
\mbox{\rm s.t. } & \alpha_1+\alpha_2+\alpha_3-\alpha_4-\alpha_5-\alpha_6=0,\\
& \alpha_5 + \alpha_6 \geq \alpha_4,\\
& 0 \leq \alpha_1,\alpha_2, \alpha_3, \alpha_4, \alpha_5, \alpha_6 \leq 10.
\end{align*}
We use \texttt{Gloptipoly} 3.8 \cite{gloptipoly} to translate the above problem into the SDP-relaxed problem and \texttt{SDPT3}\cite{sdpt3} as the semidefinite programming solver.

Note that since the degree of the multivariate polynomial involved in the objective function is $3$, at least a relaxation order of $2$ is needed for the moment matrices. Using the basis $\mathcal{B}_2$, the moment matrix of order $2$ has the following shape:

$\Mo_{2}(\mathbf{w})=
  \begin{blockarray}{*{7}{c} l}
    \begin{block}{*{7}{>{$\footnotesize}c<{$}} l}
      1& $\alpha_1$ &  $\cdots$ & $\alpha_6$ & $\alpha_1^2$ & $\cdots$ & $\alpha_6^2$\\
    \end{block}
    \begin{block}{[*{7}{c}]>{$\footnotesize}l<{$}}
w_{000000} & w_{100000} & \cdots & w_{000001} & w_{200000} & \cdots & w_{000002} & 1\\
w_{100000} & w_{200000}&  \cdots & w_{100001} & w_{300000} & \cdots & w_{100002} & $\alpha_1$\\
 & & \ddots &&&& &$ \vdots$ \\
w_{000002} &&&&&&  w_{000004} & $\alpha_6^2$\\
    \end{block}
  \end{blockarray},$

that is a $28\times 28$ real matrix and in which $210$ variables are involved.

Observe that the constraints of the problems are linear, so no need of relaxing the constrains or using localizing matrices is needed. The semidefinite problem to solve is:
\begin{align*}
\rho^1_2 = \min \L_\mathbf{\mathbf{w}}(-F_1)\\
\mbox{\rm s.t. } & \Mo_{2}(\mathbf{w}) \succ 0,\\
& w_{100000}+w_{010000}+w_{001000}-w_{000100}-w_{000010}-w_{000001}=0,\\
& w_{000010} + w_{000001} \geq w_{000100},\\
& 0 \leq w_{100000}, w_{010000}, w_{001000},  w_{000100}, w_{000010}, w_{000001} \leq 10,
\end{align*}
where in $ \L_\mathbf{\mathbf{w}}(-F_1)$ each term $\alpha^\gamma$ is transformed into $w_\gamma$, for $\gamma \in \N^n_r$.

Solving the above problem, we get $\rho_2^1=-5.6569$ and:
$$
w_{100000} =0, w_{010000} =w_{001000}  = w_{000100}=2.1213, w_{000010}=w_{000001}=1.0611.
$$
Also, \texttt{Gloptipoly} checked that the rank condition holds, certifying that $\alpha^*=(0,$ $2.1213$, $2.1213,$ $2.1213,1.0611,1.0611)$ is optimal for our problem.

The problem for the second subdivision, $R_2$, can be analogously stated (by considering $\alpha_5+\alpha_6 \leq \alpha_4$ instead of the one defining $R_1$). Also, for relaxation order $2$, \texttt{Gloptipoly} obtained an optimal value of $\rho_2^2= -5.6569$ (the same objective value as for $R_1$), and the solution was certified to be optimal with the same values as in the problem for $k=1$ (observe that the obtained optimal solution belongs to both $R_1$ and $R_2$ since $\alpha_5^*+\alpha_6^*=\alpha_4^*$).

The optimal separating hyperplane is now constructed from $\alpha^*$. First, the
 intercept, $b$, is derived by using an observation $i_0$ such that $0<\alpha_{i_0}<10$, for instance taking $i_0=2$ ($y_2=1$ and $\mathrm{x}_{2\cdot}=(0,1)$), being

$$
b= y_2 - \frac{1}{\left(\frac{3}{2}\right)^{2}} \dsum_{\gamma \in \N^6_2} c_{\gamma} (\alpha^*)^{\gamma} y^{\gamma}   \Ker[\mathbf{x}]_{R_1,\gamma,1} (\mathrm{x}_{2\cdot}) = 1 - \frac{1}{\left(\frac{3}{2}\right)^{2}} \dsum_{\gamma \in \N^6_2} c_{\gamma} (\alpha^*)^{\gamma} y^{\gamma}   \mathrm{x}_{\cdot 1}^{2\gamma} = 3
$$
Thus, the hyperplane has the following shape:
\begin{eqnarray*}
\mathcal{H} &= \left\{z \in \R^2: \frac{1}{\left(\frac{3}{2}\right)^{2}} \dsum_{\gamma \in \N_{2}^{6}} c_{\gamma} \alpha^{* \gamma} y^{\gamma} \Ker[\mathbf{x}]_{R_1,\gamma,1} (z)+3 = 0\right\}\\
&= \left\{z \in\R^2: \frac{1}{\left(\frac{3}{2}\right)^{2}} \dsum_{\gamma \in \N_{2}^{6}} c_{\gamma} \alpha^{* \gamma} y^{\gamma} (\mathrm{x}_{\cdot 1}^\gamma z_1 - \mathrm{x}_{\cdot 2}^\gamma z_2)^2=-3\right\},
\end{eqnarray*}
and the data can be classified according to the sign of  the evaluation of each point  on the above hyperplane. Hence, for the six obtained points, we get that $\mathrm{x}_{1\cdot}, \mathrm{x}_{2\cdot}$, and $\mathrm{x}_{3\cdot}$ are classified in the $+1$ side of the separating hyperplane, while $\mathrm{x}_{4\cdot}, \mathrm{x}_{5\cdot}$, and $\mathrm{x}_{6\cdot}$ are classified in
the  $-1$ side. Thus, all the points are well-classified.

\end{ex}

\begin{rmk}
Theorem \ref{thm:sdp} can be extended to solve the Lagrangian dual problem related to the general case in which $q=\frac{r}{s}$, for $r, s \in \Z_+$ with $\gcd(r,s)=1$ and $q>1$ (see Remark \ref{rmk1}). In such a case, if $\{R_k\}_{k\in \mathcal{K}}$ is a suitable $\Phi$-subdivision of $\mathrm{H}_\mathbf{y}$ and $t\ge t_0=\lceil \frac{r}{2}\rceil$,
 we have to consider the following hierarchy of semidefinite programming problems
\begin{align*}
\tilde{\rho}_t^k= \inf _{\mathbf{w}}\; & \L_\mathbf{\mathbf{w}}(-\tilde{f}_{\mathrm{s}_{R_k}})\\
\quad \mbox{s.t. } & \Mo_t(\mathbf{w})\succeq 0,\\
 &\Mo_{t-1}(  \tilde{g}^k_{j} \mathbf{w})\succeq 0, \; j=1,\ldots,m_k \\
  &\Mo_{t-\lceil \frac{r}{2}\rceil}(-\tilde{g}^k_{m_k+j} \mathbf{w})\succeq 0, \; j=1,\ldots,D, \\
  &\Mo_{t-1}(\tilde{\ell}_i \mathbf{w})\succeq 0, \; i=0,\ldots, n, \\
    &\Mo_{t-1}(\tilde{\ell}_{n+i} \mathbf{w})\succeq 0, \; i=0,\ldots, n, \\
 &\L_\mathbf{w}(\w_\mathbf{0})=1,
\end{align*}
where, $\tilde{g}_{m_k+j}^k(\alpha,\delta)=
\dsum_{\gamma \in \N_r^n}  c_{\gamma} \alpha^{\gamma} y^{\gamma} \mathrm{s}_{R_{k,j}}^r  \mathrm{x}_{\cdot j}^{\gamma}-\delta_j^{s}$ for $j=1,\ldots,D$.
Then, it is satisfied that $$\dlim_{t\to +\infty} -\tilde{\rho}_t^k \downarrow \max_{\a,\delta} \tilde{f}_{\mathrm{s}_{R_k}}(\a,\delta).$$
Observe that the polynomial optimization problems in both cases, $q$ integer and rational, have a similar shape (and also their semidefinite programming relaxations).  However, for fractional $q$, there are $D$ variables $\delta_1, \ldots, \delta_D$, a well as $D$ constraints, $\tilde{g}^k_{m_k+1} (\alpha), \ldots, \tilde{g}^k_{m_k+D} (\alpha)$, defining   the feasible region (recall that $\tilde{g}^k_{1} (\alpha), \ldots, \tilde{g}^k_{m_k} (\alpha)$  describe the
cells of a suitable $\Phi$-subdivision), so the \textit{kernel trick} cannot be applied under this setting.
\end{rmk}

\subsection{Solving the primal $\ell_p$-SVM problem}
The above machinery  allows us to solve the dual  of the SVM problem, resorting to hierarchies of SDP problems. The main drawback of that approach is the increasing size of the SDP objects that have to be handled as the relaxation order of the problem grows. The current development of SDP solvers limits the applicability of that approach to problems with several hundreds of variables which may be an issue to handle big databases.

One way to overcome that inconvenience is to attack directly the primal problem. Our strategy in order to solve the primal problem will be the following. Let $\mathcal{C}_{\R^D}(T)$ be the Banach space of continuous functions from a compact set $T \subseteq\R^d $ to $\mathbb{R}^D$. It is well-known that $\mathcal{C}_{\R^D}(T)$ admits a Schauder basis (see \cite{LiTza77}). In particular, $\mathcal{B}=\{\mathrm{z}^\gamma: \gamma \in \N^d\}$, the standard basis of multidimensional monomials is a Schauder basis for this space. Also   Bernstein and trigonometric polynomials and some others are Schauder bases of this space. This means that any continuous function defined on $T$ can be exactly represented as a sum of terms in the basis $\mathcal{B}$ (sometimes infinitely many). Thus for any
 continuous  function $ \Phi:  T \longmapsto \mathbb{R}^D$, there exists an expansion such that $\Phi(\mathrm{z})=\sum_{j=1}^\infty \tau_j \mathrm{z}_j$, with $\tau_j\in \mathbb{R}$ and $\mathrm{z}_j\in \mathcal{B}$ for any $j=1, \ldots, \infty$.

These expansions are function dependent but one may expect that with a sufficient number of terms we can approximate up to a certain degree of accuracy the standard kernel transformations usually applied in SVM. In this regard, our solution strategy transforms the original data by using a truncated Schauder basis (up to a given number of terms)
 and then solves the transformed problem \eqref{svm0} in this new extended space of original variables. This provides the classification in the extended space and this classification is applied to the original data. By standard arguments based on continuity and compactness given a prespecified accuracy the truncation order can be fixed to ensure the result.

\begin{ex}\label{ex:4}
We illustrate the \textit{primal} methodology for the same  dataset of Example \ref{ex:3}. If the transformation provided in such an example is used to compute the $\ell_{\frac{3}{2}}$-SVM, we get the following primal formulation:
\begin{align*}
\min  \;\;\; &t +10 \xi_{1} + 10 \xi_{2} + 10 \xi_{3} + 10 \xi_{4} + 10 \xi_{5} + 10 \xi_{6}\\
\mbox{s.t. } &  b + \xi_{1} \geq 1,\\
 &  \omega_{3} + b + \xi_{2} \geq 1,\\
 &   \omega_{1} + b + \xi_{3} \geq 1,\\
 &  -\omega_{1} - \sqrt[3]{2}  \omega_{2} -  \omega_{3} - b + \xi_{4} \geq 1,\\
 &   -\omega_{1} + \sqrt[3]{2}  \omega_{2} -  \omega_{3} - b + \xi_{5} \geq 1,\\
 &   - \omega_{1} + \sqrt[3]{2}  \omega_{2} -  \omega_{3} - b + \xi_{6} \geq 1,\\
 & t^2 \geq \|\omega\|_{\frac{3}{2}}^3,\\
 &  \xi_i\geq 0, i=1, \ldots, 6,\\
 & b \in \R,  \omega_j \in \R, j=1,2,3.
 \end{align*}
Note that the constraint $t^2 \geq \|w\|_{\frac{3}{2}}^3$ can be equivalently rewritten, by introducing the auxiliary variables $\zeta_1, \zeta_2$ and $\zeta_3$, as:
\begin{equation*}
 \begin{dcases}
 t^2 \geq \sum_{i=1}^d u_j, \mbox{\hspace*{1.5cm}}\\
 v_j \geq  \omega_j,   v_j \geq - \omega_j,  j=1,2,3,\\
u_j \zeta_j \geq v_j^2, j=1,2,3,\\
v_j \geq \zeta_j^2, j=1,2,3,\\
\end{dcases}
\end{equation*}
since, for each $j=1,2,3$, $v_j$ represents $| \omega_j|$, and because of the above non-linear constraints, we have that:
$$
v_j^4 \leq \zeta^2_j u_j^2 \leq u_j^2 v_j \Rightarrow u_j^2  \geq v_j^3 \quad (u_j \geq v_j^{\frac{3}{2}})
 $$

 Thus, solving the above second order cone programming problem we get  $\omega^* = (2,0,2)$ and $b^*=3$.
We also obtain that all the misclassifying errors $\xi$ are equal to zero. The same result was obtained in Example \ref{ex:3}. In Figure \ref{fig}, we draw (left picture) the separating curve when projecting the obtained hyperplane onto the original feature space.

Let us consider now the Schauder basis for continuous functions that consists of all monomials in $\R[z_1, \ldots, z_d]$.  One may define the transformation $\Phi: \R^d \rightarrow \R^{\N^d}$, $\Phi_\gamma(\mathrm{z}) = \mathrm{z}^\gamma$, for each $\gamma \in \N^d$. Note that $\Phi$ \textit{projects} the original finite-dimensional feature space onto the infinite dimensional space of sequences $\{z^\gamma\}_{\gamma\in \mathbb{N}^d}$. Hence, for any $z \in \R^d$ and $\gamma \in \N^d$,  the $\gamma$ component of $\Phi$, $\Phi_\gamma(z)$, is a real number. Truncating the basis $\mathcal{B}$ by a given order $\eta \in \N$, we define the transformation $\Phi[\eta]: \R^d \rightarrow \R^{\N^d_\eta}$, $\Phi[\eta]_\gamma(\mathrm{z}) = \mathrm{z}^\gamma$, for each $\gamma \in \N^d_\eta$.  Note that $\R^{\N^d_\eta}$ is a finite-dimensional space with dimension $\bfrac{d+\eta}{d}$.

 For instance, using $\Phi[3]$, the data are transformed into:
\begin{align*}
 X^\prime = &\{(1, 0, 0, 0, 0, 0, 0, 0, 0, 0), (1, 0, 1, 0, 0, 1, 0, 0, 0, 1), (1, 1, 0, 1, 0, 0, 1, 0, 0, 0),\\
 & (1, 1, 1, 1, 1, 1, 1, 1, 1, 1), (1, 1, -1, 1, -1, 1, 1, -1, 1, -1), \\
& (1, -1, 1, 1, -1, 1, -1, 1, -1, 1)\} \subseteq \R^{10}
\end{align*}
Then, solving \eqref{svm0} for this new dataset, we get, that in this new feature space (of dimension $10$), the optimal coefficients are:
$$
\omega^*=(0,0.1117,0.1117,-1.3295,0.4469,-1.3295,0.1117,-0.6704,-0.6704,0.1117),
$$
$$
b^*=2.1060,
$$
which define, when projecting it onto the original feature space, the curve drawn in Figure \ref{fig} (center). This solution also perfectly classifies the given points.

If we truncate the Schauder basis up to degree $4$ using $\Phi[4]$ (transforming the data into a $15$-dimensional space), we obtain the curve drawn in the right side of Figure \ref{fig}.

\begin{figure}[H]
\centering\includegraphics[scale=0.3]{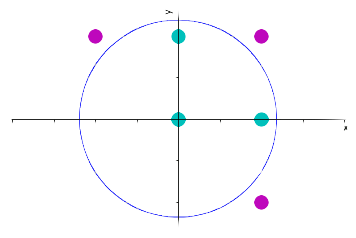} \includegraphics[scale=0.3]{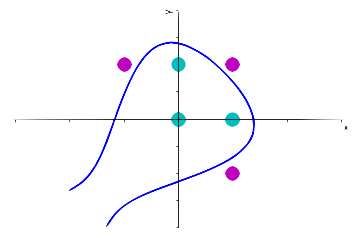} \includegraphics[scale=0.3]{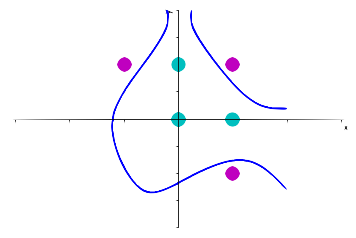}
\caption{Separating curves with the three different settings (left: using the quadratic transformation, center: using $\Phi[3]$, right: : using $\Phi[4]$).\label{fig}}
\end{figure}
\end{ex}

\section{Experiments}\label{sec:5}

We have performed a series of experiments to analyze the behavior of the proposed methods on real-world benchmark data sets. We have implemented the primal second-order cone formulation \eqref{cc1}--\eqref{cc5} and, in order to find non-linear separators, we consider the following two types of transformations on the data which can be identified  with adequate truncated Schauder bases:
\begin{itemize}
\item $\Phi[\eta]: \R^{d} \rightarrow \R^{\N^d_\eta}$. Its components, $\Phi[\eta]_\gamma(\mathrm{z})=  z^\gamma$ for $\gamma \in \N^d_\eta$, are the monomials (in $d$ variables) up to degree $\eta$.
\item  $\widetilde{\Phi}[\eta]: \R^d \rightarrow \R^{\N^d_\eta}$, with
$\widetilde{\Phi}[\eta]_\gamma(\mathbf{z}) = {\rm exp}({-\sigma \|z\|_2^2}) \dfrac{\sqrt[r]{2\sigma} \mathbf{z}^\gamma}{\sqrt[r]{\gamma_1! \cdots \gamma_d!}}$,  for $\mathbf{z} \in \R^d$, for $\gamma \in \N^d_\eta$ and $\sigma>0$.

\end{itemize}
Although both transformations have a similar shape (their components consist of monomials of certain degrees), the second one has non-unitary coefficients. Those coefficients come from the construction of the Gaussian transformation which turns out to be the Gaussian kernel. In  this second case,  the higher the order, the  closer the induced (polynomial) kernel to the gaussian kernel. Observe that the following generalized Gaussian operator $\mathbb{G}: \R^{r\times d} \rightarrow \R$ defined as
\begin{align*}
\mathbb{G}[\mathbf{x}_{i_1\cdot},\ldots,\mathbf{x}_{i_r\cdot}] &= {\rm exp}({-\sigma \dsum_{a,b=1}^r \|x_{i_a}-x_{i_b}\|_2^2}) ¡\\
&= {\rm exp}({-\sigma\dsum_{a=1}^r \|x_{i_a}\|_2^2}) \dsum_{\gamma \in \N^d}  \dfrac{2\sigma}{\gamma_1! \cdots \gamma_d!} \mathbf{x}_{i_1\cdot}^\gamma \cdots \mathbf{x}_{i_r\cdot}^\gamma, \end{align*}

is induced by using the transformation $\widetilde{\Phi}$, i.e., $\mathbb{G}[\mathbf{x}_{i_1\cdot},\ldots,\mathbf{x}_{i_r\cdot}] = \dlim_{\eta\to\infty} \sum_{\gamma \in \N^d_\eta} \widetilde{\Phi}[\eta]_\gamma$ where $\gamma=\sum_{l=1}^r  {\rm e}_{i_l}$. Hence, the transformation $\widetilde{\Phi}[\eta]$, for a given $\eta$, is nothing but a truncated expansion of the generalized Gaussian operator $\mathbb{G}$.

We construct, in our experiments, $\ell_p$-SVM separators for $p\in \{\frac{4}{3}, \frac{3}{2}, 2, 3\}$ by using $\eta$-order approximations with $\eta$ ranging in $\{1,2,3,4\}$. The case $\eta=1$ coincides with the linear separating hyperplane for both transformations.

The resulting primal  Second Order Cone  Programming  (SOCP) problem
 was coded in Python 3.6, and solved using Gurobi 7.51 in a Mac OSX El Capitan with an Intel Core i7
processor at 3.3 GHz and 16GB of RAM.

The models were tested in four classical data sets, widely used in the literature of SVM, that are listed in Table \ref{datasets}. They were obtained from the UCI Repository \cite{cleveland} and LIBSVM Datasets\cite{libsvm}. There, one can find further information about each of them.

\begin{table}[h]
\centering\begin{tabular}{r|ccc}\hline
Name & \# Obs. ($n$) & \# Features ($d$) & Source\\\hline
\texttt{cleveland} & 303 & 13 & UCI Repository\\
\texttt{housing} & 506 & 13 & UCI Repository\\
\texttt{german credit} &1000 &24 & UCI Repository\\
\texttt{colon}&62&2000& LIBSVM Datasets\\\hline
\end{tabular}
\caption{Datasets used in our experiments.\label{datasets}}
\end{table}

In order to obtain stable and meaningful results, we use a $10$-fold cross validation scheme to train the model and to test its performance. We report the accuracy of the model, which is defined as:
$$
{\rm ACC} = \dfrac{TP + TN}{n} \cdot 100
$$
where $TP$ and $TN$ are the true positive and true negative predicted values after applying the model built with the training data set to a dataset (in our case to the training or the test sample). ${\rm ACC}$ is actually the percentage of well-classified observations. We report both the averages ACC for the training data (${\rm ACC}^{\rm Tr}$) and the test data (${\rm ACC}^{\rm Test}$), and also the averages CPU times for solving each one of the ten fold cross validation subproblems using the training data. We also report the average percentage of nonzero coefficients of the optimal separating hyperplanes, over the total number of variables of the problem ($\%{\rm NonZ}$).
\setlength{\tabcolsep}{0.8pt}

\begin{table}[h]
\hspace*{-1.5cm}\centering{\scriptsize\begin{tabular}{|r|rrrr|rrrr|rrrr|rrrr|}\cline{2-17}
\multicolumn{1}{c|}{} &  \multicolumn{4}{c|}{$\ell_{\frac{4}{3}}$} &\multicolumn{4}{c|}{$\ell_{\frac{3}{2}}$} & \multicolumn{4}{c|}{$\ell_{2}$} & \multicolumn{4}{c|}{$\ell_{3}$} \\\hline
$\eta$&  ${\rm ACC}^{\rm Tr}$ & ${\rm ACC}^{\rm Test}$  & ${\rm Time}$ & ${\rm \%NonZ}$ & ${\rm ACC}^{\rm Tr}$ & ${\rm ACC}^{\rm Test}$  & ${\rm Time}$ & ${\rm \%NonZ}$ &${\rm ACC}^{\rm Tr}$ & ${\rm ACC}^{\rm Test}$ &${\rm Time}$& ${\rm \%NonZ}$ & ${\rm ACC}^{\rm Tr}$ & ${\rm ACC}^{\rm Test}$ & ${\rm Time}$ & ${\rm \%NonZ}$ \\\hline\hline
\multicolumn{1}{c|}{}& \multicolumn{16}{c|}{\texttt{cleveland} dataset ($C=4$)}\\\hline
1& 85.11\% & 82.84\% &0.01& 100\%& 85.11\% & 83.16\% &0.01& 100\% & 85.15\% & {\bf 83.48\%} &0.01& 100\% & 85.33\% & 83.15\% &0.01& 100\% \\
2& 94.02\% & {\bf 82.57\%} &0.44& 88.86\%& 93.58\% & 81.57\% &0.40& 94.48\% & 93.33\% & 81.58\% &0.04& 98.95\% & 93.35\% & 79.61\% &0.41& 98.31\% \\
3& 99.34\% & 74.93\% &5.49& 72.02\%& 99.41\% & 75.60\% &2.87& 84.84\% & 99.67\% & 78.53\% &0.14& 98.82\% & 99.67\% & {\bf 80.23\%} &2.65& 99.66\% \\
4& 99.67\% & 76.56\% &28& 72.00\%& 99.67\% & 76.92\% &22.5& 81.88\% & 99.74\% & {\bf 79.21\%} &0.47& 97.54\% & 100\% & 78.60\% &17.56& 99.31\% \\\hline
\multicolumn{1}{c|}{}& \multicolumn{16}{c|}{\texttt{housing} dataset ($C=64$)}\\\hline
1& 88.56\% & {\bf 85.36\%} &0.01& 100\%& 88.25\% & 85.16\% &0.02& 100\% & 88.10\% & 84.36\% &0.02& 100\% & 87.92\% & 83.35\% &0.04& 100\% \\
2& 94.93\% & 78.85\% &0.22& 90.57\%& 94.14\% & 80.03\% &0.42& 96.67\% & 92.31\% & 80.02\% &0.14& 99.05\% & 91.15\% & {\bf 81.38\%} &0.39& 98.86\% \\
3& 98.60\% & {\bf 80.95\%} &9.57& 57.36\%& 98.24\% & 80.00\% &6.13& 74.84\% & 97.34\% & 79.81\% &0.51& 97.27\% & 96.07\% & 78.84\% &5.86& 99.59\% \\
4& 99.23\% & {\bf 79.99\%} &45.09& 50.82\%& 98.90\% & 77.78\% &31.69& 68.32\% & 98.37\% & 78.63\% &1.59& 95.30\% & 97.98\% & 78.43\% &27.42& 98.53\% \\\hline
\multicolumn{1}{c|}{}& \multicolumn{16}{c|}{\texttt{german} credit dataset ($C=64$)}\\\hline
1& 78.53\% & {\bf 76.20\%} &0.02& 99.58\%& 78.53\% & {\bf 76.20\%} &0.04& 99.58\% & 78.53\% & {\bf 76.20\%} &0.05& 99.58\% & 78.54\% & {\bf 76.20\%} &0.04& 99.58\% \\
2& 93.03\% & 67.50\% &0.92& 96.62\%& 93.04\% & 67.60\% &2.50& 98.15\% & 92.98\% & 67.40\% &0.50& 99.69\% & 93.00\% & {\bf 67.70\%} &3.32& 99.75\% \\
3& 100\% & {\bf 71.90\%} &85.86& 60.93\%& 100\% & 70.50\% &94.12& 78.20\% & 100\% & 70.20\% &3.14& 96.76\% & 100\% & 68.90\% &98.58& 99.65\% \\\hline
\multicolumn{1}{c|}{}& \multicolumn{16}{c|}{\texttt{colon} dataset ($C=1$)}\\\hline
1& 100\% & {\bf 82.14\%} &20.3& 46.14\%& 100\% & 80.48\% &15.73& 64.54\% & 100\% & 80.48\% &0.05& 89.74\% & 100\% & 80.48\% &14.61& 99.44\%\\\hline
\end{tabular}
\caption{Results of our computational experiments for ${\Phi}[\eta]$.\label{table1}}}
\end{table}

Since our models depend on two parameters ($C$ and $\eta$) and one more  ($\sigma)$ in case of using the transformation $\widetilde{\Phi}[\eta]$, we first perform a test to find the best choices for $C$ and $\sigma$. For each dataset, we consider a part of the training sample and run the models by moving $C$ and $\sigma$ over the grid $\{2^{k}: k \in \{-7,-6, \ldots, 6, 7\}\}$. For each dataset, the best combination of parameters is identified and chosen. Then, it is used for the rest of the experiments on such a dataset.

\begin{table}[h]
\hspace*{-1.5cm}\centering{\scriptsize\begin{tabular}{|r|rrrr|rrrr|rrrr|rrrr|}\cline{2-17}
\multicolumn{1}{c|}{} &  \multicolumn{4}{c|}{$\ell_{\frac{4}{3}}$} &\multicolumn{4}{c|}{$\ell_{\frac{3}{2}}$} & \multicolumn{4}{c|}{$\ell_{2}$} & \multicolumn{4}{c|}{$\ell_{3}$} \\\hline$\eta$&  ${\rm ACC}^{\rm Tr}$ & ${\rm ACC}^{\rm Test}$  & ${\rm Time}$ & ${\rm \%NonZ}$ & ${\rm ACC}^{\rm Tr}$ & ${\rm ACC}^{\rm Test}$  & ${\rm Time}$ & ${\rm \%NonZ}$ &${\rm ACC}^{\rm Tr}$ & ${\rm ACC}^{\rm Test}$ &${\rm Time}$& ${\rm \%NonZ}$ & ${\rm ACC}^{\rm Tr}$ & ${\rm ACC}^{\rm Test}$ & ${\rm Time}$ & ${\rm \%NonZ}$ \\\hline\hline
\multicolumn{1}{c|}{}& \multicolumn{16}{c|}{\texttt{cleveland} dataset ($C=2$ and $\sigma=2^{-6}$)}\\\hline
1& 85.15\% & 83.16\% &0.01& 99.23\%& 85.11\% & 83.16\% &0.01& 99.23\% & 85.33\% & {\bf 83.48\%} &0.01& 100\% & 85.22\% & {\bf 83.48\%} &0.01& 100\% \\
2& 88.30\% & {\bf 84.19\%} &0.24& 66.86\%& 88.05\% & 82.55\% &0.28& 84.00\% & 86.72\% & 80.58\% &0.04& 99.52\% & 84.01\% & 77.26\% &0.24& 99.81\% \\
3& 92.15\% & 80.87\% &4.91& 49.25\%& 92.12\% & 81.54\% &2.77& 68.50\% & 92.41\% & {\bf 81.55\%} &0.13& 96.54\% & 92.59\% & 81.20\% &2.54& 99.68\% \\
4& 84.38\% & 83.47\% &19.57& 3.53\%& 84.41\% & 83.46\% &12.83& 8.47\% & 84.71\% & 83.46\% &0.19& 41.97\% & 85.18\% & {\bf 83.48\%} &15.51& 63.50\% \\\hline
\multicolumn{1}{c|}{}& \multicolumn{16}{c|}{\texttt{housing} dataset ($C=64$ and $\sigma=2^{-6}$)}\\\hline
1& 88.56\% & {\bf 85.36\%} &0.01& 100\%& 88.25\% & 85.16\% &0.02& 100\% & 88.10\% & 84.36\% &0.02& 100\% & 87.53\% & 84.71\% &0.04& 100\% \\
2& 89.53\% & {\bf 83.53\%} &0.25& 75.14\%& 88.84\% & 82.95\% &0.48& 88.48\% & 87.42\% & 82.94\% &0.11& 99.24\% & 86.72\% & 82.46\% &0.66& 100\% \\
3& 94.01\% & 80.03\% &4.47& 37.38\%& 93.30\% & 79.82\% &4.29& 54.52\% & 91.50\% & {\bf 80.21\%} &0.25& 88.30\% & 90.36\% & 79.95\% &3.05& 99.62\% \\
4& 90.80\% & 82.37\% &14.43& 4.23\%& 90.58\% & {\bf 83.36\%} &20.98& 7.56\% & 88.95\% & 81.59\% &0.17& 20.31\% & 86.69\% & 82.95\% &12.2& 65.97\% \\\hline
\multicolumn{1}{c|}{}& \multicolumn{16}{c|}{\texttt{german credit} dataset ($C=0.25$ and $\sigma=2^{-6}$)}\\\hline
1& 78.35\% & {\bf 79.00\%} &0.02& 99.48\% & 78.33\% & 78.88\% &0.04& 100\% & 78.25\% & 78.63\% &0.05& 100\% & 78.26\% & 78.75\% &0.04& 100\%\\
2& 77.29\% & 74.38\% &2.96& 90.23\%& 77.83\% & 75.00\% &2.37& 97.62\% & 79.23\% & 74.44\% &0.45& 99.97\% & 81.15\% & {\bf 75.22\%} &2.13& 100\% \\
3& 76.72\% & 76.75\% &57.01& 3.39\%& 92.78\% & {\bf 79.00\%} &63.64& 91.69\% & 96.36\% & 77.88\% &2.75& 99.82\% & 98.24\% & 76.57\% &48.4& 99.99\% \\\hline
\multicolumn{1}{c|}{}& \multicolumn{16}{c|}{\texttt{colon} dataset ($C=1$)}\\\hline
1& 100\% & {\bf 82.14\%} &20.3& 46.14\%& 100\% & 80.48\% &15.73& 64.54\% & 100\% & 80.48\% &0.05& 89.74\% & 100\% & 80.48\% &14.61& 99.44\%\\\hline
\end{tabular}
\caption{Results of our computational experiments for transformation $\widetilde{\Phi}[\eta]$.\label{table2}}}
\end{table}

Tables \ref{table1} and \ref{table2} report, respectively, the average results for the feature transformations $\Phi[\eta]$ and $\widetilde{\Phi}[\eta]$.
We report the results on those choices of $\eta$  that result in a good compromise between some improvement in accuracy and complexity on the problem solving. For instance, while for the datasets \texttt{cleveland} and \texttt{housing}, a degree up to $\eta=4$ was considered, for \texttt{german credit} a degree of $\eta=3$ already allows us to perfectly fit the data (${\rm ACC}^{\rm Tr}=100\%$), and for \texttt{colon}, $\eta=1$, i.e., the linear fitting, is enough to  correctly classify the training sample. The best accuracy results for each $\eta$ and each dataset are boldfaced. As a general observation of our  experiment  if $\Phi[\eta]$ is used, there is no gain (in terms of accuracy on the testing sample) by increasing the value of $\eta$ since the linear hyperplane is the one where we got the best results. However, such a situation changes when $\widetilde{\Phi}[\eta]$ is used since we found datasets (as \texttt{cleveland} or \texttt{german}) in which the best accuracy results are obtained for non-linear transformations. It can be also observed that a best fitting for the training data does not always imply the best performance for the test data. This behavior may be due to overfitting.

Concerning the use of different norms, one can observe that there is no a significative best one in terms of accuracy on the test sample, although we obtain most of the best results using $\ell_{\frac{4}{3}}$. At this point, we would like to remark that the usual norm used in SVM, the Euclidean norm, does not stand out over the others. On the other hand, the $\ell_2$-norm cases are solved in much smaller computational times than the other, since this norm is directly  representable as a  single quadratic constraint in our model, while the others need to consider auxiliary variables and constraints which increase the complexity for solving the problem. In spite of that, the remaining computational times are reasonable with respect to the size of the instances.

In terms of the number of features used in the hyperplane (those with nonzero optimal $\omega$-coefficients), the one which uses the less number of them is, as expected,  the $\ell_{\frac{4}{3}}$-norm since it is \textit{closest} to the $\ell_1$-norm which is known to be highly sparse.

\section{Conclusions}

The concept of classification margin is on the basis of the support vector machine technology to classify data sets. The measure of this margin has been usually done using Euclidean  ($\ell_2$) norm, although some alternative attempts can be found in the literature, mainly with  $\ell_1$ and $\ell_\infty$ norms. Here, we have addressed the analysis of  a general framework for support vector machines with the family of $\ell_p$-norms with $p>1$.  Based on the properties and geometry of the considered models and  norms we have derived a unifying theory that allows us to obtain new classifiers that subsume most of the previously considered cases as particular instances.  Primal and dual formulations for the problem are provided,  extending those already known in the literature. The dual formulation permits to extend the so-called \textit{kernel trick}, valid for the $\ell_2$-norm case, to more general cases with $\ell_p$-norms, $p>1$. The tools that have been used in our approach combine modern mathematical optimization and geometrical and  tensor analysis. Moreover, the contributions of this paper are not only theoretical but also computational: different solution approaches have been developed and tested on four standard benchmark datasets from the literature. In terms of separation and classification no clear domination exists among the different possibilities and models, although in many cases the use of the standard SVM with $\ell_2$-norm is improved by other norms (as for instance the $\ell_{4/3}$). Analyzing and comparing the different models may open new avenues for further research, as for instance the application to categorical data by introducing additional binary variables in our models as it has been recently done in the standard SVM model, see e.g.\cite{cam17}.

\section*{Acknowledgements}

The first and second authors were partially supported by the project MTM2016-74983-C2-1-R (MINECO, Spain). The third author was partially supported by the project  MTM2016-74983-C2-2-R (MINECO, Spain). The first author was also partially supported by the research project PP2016-PIP06 (Universidad
de Granada) and the research group SEJ-534 (Junta de Andaluc\'ia).

\end{document}